\newcommand{\cmark}{\ding{51}}%
\newcommand{\xmark}{\ding{55}}%
\def\hlinewd#1{%
\noalign{\ifnum0=`}\fi\hrule \@height #1 %
\futurelet\reserved@a\@xhline}
\renewcommand\expandafter\paragraph\expandafter{%
    \expandafter\@fb@secFB\paragraph
  }%
\newcounter{counter}
\numberwithin{counter}{section}
\newtheorem{theorem}[counter]{Theorem}
\newtheorem{assumption}[counter]{Assumption}
\newtheorem*{remark}{Remark}
\newtheorem{definition}[counter]{Definition}
\newtheorem{lemma}[counter]{Lemma}
\numberwithin{equation}{section}
\let\orgdescriptionlabel\descriptionlabel
\renewcommand*{\descriptionlabel}[1]{%
  \let\orglabel\label
  \let\label\@gobble
  \phantomsection
  \edef\@currentlabel{#1}%
  \let\label\orglabel
  \orgdescriptionlabel{#1}%
}
\newcommand{\abs}[1]{\left\lvert #1\right\rvert}
\newcommand{\OA}{\operatorname{\mathsf{A}}}
\newcommand{\OV}{\operatorname{\mathsf{V}}}
\newcommand{\OC}{\operatorname{\mathsf{C}}}
\newcommand{\OK}{\operatorname{\mathsf{K}}}
\newcommand{\OW}{\operatorname{\mathsf{W}}}
\newcommand{\OH}{\operatorname{\mathsf{H}}}
\newcommand{\Id}{\operatorname{\mathsf{Id}}}
\newcommand{\OR}{\operatorname{\mathsf{R}}}
\renewcommand{\OE}{\operatorname{\mathsf{E}}}
\renewcommand{\OH}{\operatorname{\mathsf{H}}}
\newcommand{\OI}{\mathcal{I}}
\newcommand{\MV}{\mathbf{V}_h}
\newcommand{\MW}{\mathbf{W}_h}
\newcommand{\MM}{\mathbf{M}_h}
\newcommand{\ME}{\mathbf{E}_h}
\newcommand{\MK}{\mathbf{K}_h}
\newcommand{\MH}{\mathbf{H}_h}
\newcommand{\MEi}{\mathbf{E}_{i,h}}
\newcommand{\calS}{\mathcal{S}}
\newcommand{\calL}{\mathcal{L}}
\renewcommand{\O}{\mathcal{O}}
\newcommand{\BH}{\mathbf{H}}
\newcommand{\Vx}{\mathbf{x}}
\newcommand{\Vn}{\mathbf{n}}
\newcommand{\Vv}{\mathbf{v}}
\newcommand{\Vu}{\mathbf{u}}
\newcommand{\Vgamma}{\boldsymbol{\gamma}}
\newcommand{\Vrho}{\boldsymbol{\rho}}
\newcommand{\R}{\mathbb{R}}
\renewcommand{\H}{\mathbb{H}}
\newcommand{\dual}[2]{\left\langle #1\,,\,#2\right\rangle}
\newcommand{\norm}[2]{\left\lVert #1\right\rVert_{#2}}
\definecolor{orange}{rgb}{1,0.4,0}
\definecolor{green}{rgb}{0,0.4,0}
\newcommand{\red}[1]{{\color{red}#1 }}
\newcommand{\orange}[1]{{\color{orange}#1 }}
\newcommand{\green}[1]{{\color{green}#1 }}
\def\d{\delta}
\newcommand{\curl}{\operatorname{\boldsymbol{\mathrm{curl}}}}
\renewcommand{\d}{\ensuremath{\mathrm{d}}}
\newcommand{\RR}{\mathbb{R}}
\newcommand{\NN}{\mathbb{N}}
\newcommand{\KK}{\mathbb{K}}
\newcommand{\GGG}{\mathcal{G}}
\newcommand{\SLk}{\Psi_{SL, k}}
\newcommand{\DLk}{\Psi_{DL, k}}
\newcommand{\vect}[1]{\boldsymbol{#1}}
\newcommand{\bcurl}{\textbf{curl}}
\renewcommand{\div}{\text{div}}
\newcommand{\Hdiv}{\vect{H}^{-1/2}_{\times}(\div_{\Gamma},\Gamma)}
\newcommand{\Hcurl}{\vect{H}^{-1/2}_{\times}(\bcurl_{\Gamma},\Gamma)}
\newsavebox{\@brx}
\newcommand{\llangle}[1][]{\savebox{\@brx}{\(\m@th{#1\langle}\)}%
  \mathopen{\copy\@brx\kern-0.5\wd\@brx\usebox{\@brx}}}
\newcommand{\rrangle}[1][]{\savebox{\@brx}{\(\m@th{#1\rangle}\)}%
  \mathclose{\copy\@brx\kern-0.5\wd\@brx\usebox{\@brx}}}
\newcommand{\Ph}{\mathcal{P}_h}
\title{Convergence of Calder\'on residuals}
\author{R. Hiptmair$^1$, C. Urz\'ua-Torres$^2$, A. Wisse$^2$}
\date{
{\small 
$^1$Seminar for Applied Mathematics, ETH Zurich,\\
R\"amistrasse 101, 8092 Z\"urich, Switzerland.\\[3mm]
$^2$Delft Institute of Applied Mathematics, TU Delft, \\
Mekelweg 4, 2628CD Delft, The Netherlands }}
\begin{document}
\maketitle

%%%%%%%%%%%%%%%%%%%%%%%%%%%%%%%%%%%%%%%%%%%%%%%%%%%%%%%%%%%%%%%%%%%%%%%%%%%%%%%
\begin{abstract}
In this paper, we describe a framework to compute expected convergence rates for
residuals based on the Calder\'on identities for general second order differential 
operators for which fundamental solutions are known. The idea is that these rates 
could be used to validate implementations of boundary integral operators and 
allow to test operators separately by choosing solutions where parts of the 
Calder\'on identities vanish. Our estimates rely on simple vector norms, and 
thus avoid the use of hard-to-compute norms and the residual computation can be 
easily implemented in existing boundary element codes.
We test the proposed Calder\'on residuals as debugging tool by introducing 
artificial errors into the Galerkin matrices of some of the boundary integral 
operators for the Laplacian and time-harmonic Maxwell's equations. From this, we 
learn that our estimates are not sharp enough to always detect errors, but 
still provide a simple and useful debugging tool in many situations.
\end{abstract}
%%%%%%%%%%%%%%%%%%%%%%%%%%%%%%%%%%%%%%%%%%%%%%%%%%%%%%%%%%%%%%%%%%%%%%%%%%%%%%%

% \tableofcontents

%%%%%%%%%%%%%%%%%%%%%%%%%%%%%%%%%%%%%%%%%%%%%%%%%%%%%%%%%%%%%%%%%%%%%%%%%%%%%%%
\section{Introduction}
Boundary integral equations (BIEs) and boundary element methods are popular when 
dealing with problems on unbounded domains and for which a fundamental solution 
is available. What is considerably less popular among practitioners and students 
is to validate and debug a boundary element code. On the one hand, the reality 
that one needs to handle singular integral adds complexity to the 
implementation (and thus additional places where things can go wrong). On the 
other hand, the fact that convergence rates for Galerkin discretizations of 
boundary integral equations are in fractional and even negative norms, make them 
an ``all-or-nothing-tool'' for code validation, and hence not truly useful for 
debugging. Indeed, if the Galerkin matrices are correctly implemented, one can 
typically choose a reference solution and use some of the matrices to measure 
the error via exploiting the norm equivalence between the solution space of the 
boundary integral equation and the energy norm of the operator. If the observed 
convergence rate agrees with the expected one, one knows that the implementation 
is correct. This, however, does not work if the implementation of the Galerkin 
matrix is incorrect, but how can we assess this in the first place?

In this paper, we describe a framework to compute expected convergence rates for 
residuals based on the Calder\'on identities for general second order differential 
operators for which fundamental solutions are known. The idea is that these rates 
could be used to 
validate implementations of boundary integral operators and allow to test operators 
separately by choosing solutions where parts of the Calder\'on identities vanish. 
Our estimates rely on simple vector norms, and thus avoid the use 
of hard-to-compute norms and the residual computation can be easily implemented in existing 
boundary element codes.\\
{\bf{Outline}}\\ To illustrate the idea, we consider the general case 
first and, after this, fill in the details for the Laplace equation and
time harmonic Maxwell's equations. We also provide several numerical experiments 
with correct and defective matrices to investigate the performance of the proposed
\emph{Calder\'on residuals} to detect implementation errors. It turns out that 
our estimates are not sharp enough to capture everything, but, given their
implementation simplicity, we believe they are still worth trying in some cases.

%%%%%%%%%%%%%%%%%%%%%%%%%%%%%%%%%%%%%%%%%%%%%%%%%%%%%%%%%%%%%%%%%%%%%%%%%%%%%%%
\section{General Case}
Let  $\Omega\subset\R^d$ for $d=2,3$ be a bounded domain with Lipschitz boundary 
$\Gamma = \partial \Omega$. We consider a second order partial differential
equation of the type
\begin{align}\begin{cases}
\label{eq:BVP}  
  &\calL u = 0 \quad\text{in } \R^d \setminus \overline{\Omega},\\
  &\text{+ boundary conditions on } \Gamma,\\
  &\text{+ radiation conditions at $\infty$,}
\end{cases}\end{align}
which we want to solve using boundary integral equations. 

Let $\OC$ be the Calderón projector associated to the BIEs for $\calL$ and 
$\Vgamma$ the Cauchy trace operator comprising of the Dirichlet-type trace 
$\gamma_0$ and the Neumann-type trace $\gamma_1$. Then, 
the corresponding exterior Calderón identity is given by 
\begin{align}\label{eq:general_Calderon_identity}
  \OC \vect{\gamma} = \vect{\gamma} \iff (\OC - \Id)\vect{\gamma} = 0,
\end{align}
where $\Id$ denotes the identity operator.
For now, we want to express this in the most general way possible. Therefore, 
we write
\begin{align*}
  \OC := \begin{pmatrix}
        \OA_1 + \tfrac{1}{2}\Id & \OA_2 \\
         \OA_3 & \OA_4 + \tfrac{1}{2}\Id
       \end{pmatrix},\quad \vect{\gamma} =\begin{pmatrix}
                                           \gamma_0 u \\\gamma_1 u 
                                         \end{pmatrix},
\end{align*}
where $\OA_i$ are the boundary integral operators (BIOs) related to 
$\mathcal{L}$ and $\Vgamma$. We have that
\begin{align*}
  \OA_1: X_0 \to X_0,\quad & \OA_2: X_1 \to X_0,\\
  \OA_3: X_0 \to X_1, \quad & \OA_4: X_1 \to X_1,
\end{align*}
where $X_0$ and $X_1$ are suitable Hilbert spaces on the boundary $\Gamma$ such 
that $X_0'$ and $X_1$ are isomorphic, and $\OA_1,\dots,\OA_4$ are continuous 
operators and the traces $\gamma_0$ and $\gamma_1$ are also continuous.

If $u$ is a solution to \eqref{eq:BVP}, then \eqref{eq:general_Calderon_identity} 
implies 
\begin{subequations}\label{eq:general_BIE}
\begin{align}
(\OA_1 - \tfrac{1}{2}\Id)\gamma_0 u + \OA_2 \gamma_1 u &=0 \quad \text{ in } X_0 \label{eq:general_BIE1},\\
\OA_3 \gamma_0 u+ (\OA_4 -\tfrac{1}{2}\Id)\gamma_1 u &=0 \quad \text{ in } X_1\label{eq:general_BIE2}
\end{align}
\end{subequations}
hold exactly. We can use \eqref{eq:general_BIE} for code validation by testing 
\begin{subequations}\label{eq:general_BIE_discrete}
\begin{align}
(\OA_1 - \tfrac{1}{2}\Id)\OI_0\gamma_0 u + \OA_2\OI_1 \gamma_1 u &\approx0 \label{eq:general_BIE1_discrete},\\
\OA_3 \OI_0\gamma_0 u+ (\OA_4 -\tfrac{1}{2}\Id)\OI_1\gamma_1 u &\approx0\label{eq:general_BIE2_discrete},
\end{align}
\end{subequations}
where $\OI_0$ and $\OI_1$ are operators that map the traces $\gamma_0 u$ and 
$\gamma_1 u$ to suitable finite dimensional subspaces. They can be interpolators 
or projections, but for simplicity we will refer to them as interpolants. Why 
``$\approx$" now? This is because in general the interpolants will not lie in 
the kernel of $(\OC - \Id)$. Yet we can expect them to be close, thus checking 
this can provide evidence for the correctness of a BEM code. More precisely, 
failing to see ``$\approx 0$" in \eqref{eq:general_BIE_discrete} will indicate 
errors.  

To grasp what this means in quantitative terms, let us consider a family of 
dyadically refined meshes $(\GGG_h)_{h\in \H}$ of $\Gamma$, where the index $h$ 
denotes the corresponding meshwidth. Then, for each $h\in \H$, let
\begin{equation*}
  X_{0,h}\subset X_0, \quad X_{1,h}\subset X_1
\end{equation*}
define (conforming) boundary element spaces on the mesh $\GGG_h$ such that 
the approximation of $X_i$ by $X_{i,h}$ improves when $h\to0$.
Next, we introduce interpolation operators (`boundary element interpolants') 
\begin{align*}
  \OI_0 : \mathcal{D}(X_0)\subset X_0 \to X_{0,h},\quad \OI_1: \mathcal{D}(X_1)
  \subset X_1 \to X_{1,h}
\end{align*}
mapping to the finite-dimensional subspaces. With this, we define the 
\emph{Calder\'on residuals} as 
\begin{subequations}
\begin{align}
r_0(\psi) &:= \int_{\Gamma} [(\OA_1 -\tfrac{1}{2}\Id)\OI_0 \gamma_0 u + \OA_2 
\OI_1 \gamma_1 u]\psi \;\d \Gamma, \quad \psi\in X_1,\label{eq:residuals1}\\
r_1(\phi) &:= \int_{\Gamma}[\OA_3 \OI_0 \gamma_0 u + (\OA_4 -\tfrac{1}{2}\Id)
\OI_1\gamma_1 u]\phi \;\d\Gamma,\quad \phi\in X_0.\label{eq:residuals2}
\end{align}
\end{subequations}
By the approximation property of our finite-dimensional subspaces, we expect these 
residuals to become small when $h \to 0$. In order to quantify how this happens, 
it is easier to consider the rewritten forms
\begin{align*}
  r_0(\psi) &= \int_{\Gamma} [(\OA_1 -\tfrac{1}{2}\Id)(\OI_0-\Id) \gamma_0 u + 
  \OA_2 (\OI_1-\Id) \gamma_1 u]\psi \;\d \Gamma ,\\
  r_1(\phi) &= \int_{\Gamma}[\OA_3 (\OI_0-\Id) \gamma_0 u + (\OA_4 -\tfrac{1}{2}
  \Id)(\OI_1-\Id)\gamma_1 u]\phi \;\d\Gamma,
\end{align*}
which are obtained by subtracting the weak forms of \eqref{eq:general_BIE1} and 
\eqref{eq:general_BIE2} from \eqref{eq:residuals1} and \eqref{eq:residuals2}, 
respectively, so that we can use existing results to compute a convergence rate.
In other words, we bound the residuals by
\begin{align}
\label{eq:EstimateIdea}
\abs{r_0(\psi)} \leq \left[ \right.&\norm{\OA_1 -\tfrac{1}{2}\Id}{X_0\to X_0}
\norm{(\OI_0-\Id) \gamma_0 u}{X_0}\nonumber\\ &+ \left.
\norm{\OA_2}{X_1\to X_0}\norm{(\OI_1-\Id) \gamma_1 u}{X_1}\right]\norm{\psi}
{X_0^\prime} , \nonumber\\
\abs{r_1(\phi)}\leq \left[ \right.& \norm{\OA_3}{X_0\to X_1} \norm{(\OI_0-\Id)
\gamma_0 u}{X_0} \nonumber\\ &+ \left.  
\norm{\OA_4 -\tfrac{1}{2} \Id}{X_1\to X_1}\norm{(\OI_1-\Id)\gamma_1 u}{X_1}
\right]\norm{\phi}{X_1^\prime} .
\end{align}

Since estimates of the type 
$$\norm{(\OI_i -\Id)w}{X_i} \leq h^{\alpha_i}\norm{w}{\mathcal{X}_i}, 
\quad i=0,1,$$
with $\norm{\cdot}{\mathcal{X}_i}$ some appropriate norm, are usually 
available, then it only remains to estimate the test functions when these are 
taken in the finite-dimensional subspaces as mentioned above. 
For this, let $\{\beta_i, i=1,\dots,N_0\}$ and $\{b_j, j=1,\dots,N_1\}$ be 
bases of $X_{0,h}$ and $X_{1,h}$, respectively. With this, we define the 
\emph{residual vectors}
\begin{align}\label{eq:general_residual_vectors}
  \vect{\rho}_0 &:= [r_0(\beta_i)]_{i=1}^{N_0},\\
  \vect{\rho}_1 &:= [r_1(b_j)]_{j=1}^{N_1}.
\end{align}

Hence, we need to bound $\norm{\beta_i}{X_0}$ and $\norm{b_j}{X_1}$ and by 
combining this with \eqref{eq:EstimateIdea} we know how the residual vectors 
will change with $h$. Consequently, one can use these convergence rates to deduce 
if the Galerkin matrices for the different BIOs are implemented correctly by 
employing a manufactured solution to the exterior BVP \eqref{eq:BVP}; calculating 
the traces exactly, and finally letting the code compute the residuals and 
checking whether the observed rate agrees with the expected one.

\begin{remark}
 We can proceed analogously and use the interior Calder\'on identities instead 
 of the exterior ones. Although for the sake of brevity we will not write the
 corresponding computations, it is worth mentioning that the estimates follow
 analogously and that this allows the practitioner to choose how to test their 
 code depending on the analytical or manufactured solutions available. We will 
 show examples regarding both interior and exterior Calder\'on identities when 
 we do numerical experiments in later sections. 
\end{remark}

%%%%%%%%%%%%%%%%%%%%%%%%%%%%%%%%%%%%%%%%%%%%%%%%%%%%%%%%%%%%%%%%%%%%%%%%%%%%%%%
\section{First Application: Laplacian}
\label{sec:Laplace}

Let us begin by looking for $u$ such that $-\Delta u = 0$ in $\RR^d \setminus 
\overline{\Omega}$ and the decay condition $u(\Vx) = \mathcal{O}(\Vert\Vx^{-1}
\Vert)$ as $\Vert \Vx \Vert \rightarrow\infty$ is verified. This means that its 
weak solution $u\in H^1_{loc}(\Omega)$, and thus, that we consider the Dirichlet 
trace $\tau_D: H^1(\Omega) \to H^{1/2}(\Gamma)$, the Neumann trace $\tau_N: 
H^1_{loc}(\Omega)\to H^{-1/2}(\Gamma)$, and the related BIOs\footnote{We refer 
to \cite[Chapter~2]{Sauter_Schwab} for the definitions of these Sobolev 
spaces.}
\begin{itemize}
\item $\OV: H^{-1/2}(\Gamma)\to H^{1/2}(\Gamma) \quad$ (weakly singular);
\item $\OW: H^{1/2}(\Gamma)\:\:\to H^{-1/2}(\Gamma) \quad$ (hypersingular);
\item $\OK: H^{1/2}(\Gamma)\:\:\to H^{1/2}(\Gamma) \quad$ (double layer);  
\item $\OK': H^{-1/2}(\Gamma)\to H^{-1/2}(\Gamma) \quad$ (adjoint double layer).
\end{itemize}
Hence, we have that, related to \eqref{eq:BVP} and 
\eqref{eq:general_Calderon_identity}, $\mathcal{L} = -\Delta$, $\gamma_0 = 
\tau_D$ and $\gamma_1 = \tau_N$. In particular, the following identities are 
true for the Dirichlet and Neumann trace of $u$ for the exterior BVP:
\begin{align}
\label{eq:calderonLaplaceext}
  \begin{pmatrix}
    \frac{1}{2}\Id +\OK && -\OV\\
    -\OW && \frac{1}{2}\Id - \OK'
  \end{pmatrix} 
  \begin{pmatrix}
  \tau_D u\\ \tau_N u
  \end{pmatrix} =
  \begin{pmatrix}
  \tau_D u\\ \tau_N u
  \end{pmatrix} \iff \begin{pmatrix}
    \frac{1}{2}\Id -\OK && \OV\\
    \OW && \frac{1}{2}\Id + \OK'
  \end{pmatrix} 
  \begin{pmatrix}
  \tau_D u\\ \tau_N u
  \end{pmatrix} = 0.
\end{align}

Next, we proceed to discretize, which requires us to define our boundary element
spaces. 
\begin{definition}[{\cite[Definitions 4.1.36, 4.1.17]{Sauter_Schwab}}]
Let $\Gamma$ be a piecewise smooth surface
and let $\GGG_h$ be a regular surface mesh of $\Gamma$ consisting of triangles. 
Let $\chi=\{\chi_{\tau}:\tau\in\GGG_h\}$ be a mapping vector and let $p\in\NN_0$. 
We first introduce
\begin{equation*}
\mathbb{P}^{\Delta}_p :=\text{span} \{\xi^{\mu}: \mu\in\NN^2_0 \wedge \abs{\mu}
\leq p\}.
\end{equation*}
Then, we define for $p\in\NN_0$
\begin{equation*}
\calS^{p,-1}(\GGG_h):= \{\psi:\Gamma\to \KK: \forall\tau\in\GGG_h, \psi\circ 
\chi_{\tau}\in \mathbb{P}^{\Delta}_p\},
\end{equation*}
and for $p\geq 1$
\begin{equation*}
\calS^{p,0}(\GGG_h):= \{\psi\in C^0(\Gamma): \forall\tau\in\GGG_h, \psi|_{\tau}
\circ\chi_{\tau}\in \mathbb{P}^{\Delta}_p\}.
\end{equation*}
\end{definition}

We remind the reader that we have interpolation operators mapping from subspaces 
of our trace spaces to these BE spaces. Indeed, we write $\OI_h^{p,0}$ for the 
nodal interpolation operator mapping to $\calS^{p,0}(\GGG_h)$, and $\OI_h^{p-1,
-1}$ for the local $L^2$ projector to $\mathcal{S}_{\GGG}^{p-1,-1}$ for $p\geq 
1$. We always let $\GGG_h$ be a shape regular, quasi uniform family of meshes. 

Following the derivation from \eqref{eq:residuals1} and \eqref{eq:residuals2}, 
equation \eqref{eq:calderonLaplaceext} leads to the following 
residuals
\begin{align}
r_D(\psi) :=& \int_{\Gamma}[(\frac{1}{2} \Id -\OK)\OI_h^{p,0} (\tau_D u)
+ \OV(\OI_h^{p-1,-1}\tau_N u)](x) \psi(x) \d S(x) \nonumber\\
=& \int_{\Gamma}[(\frac{1}{2} \Id -\OK)(\OI_h^{p,0} -\Id)(\tau_D u)
+ \OV((\OI_h^{p-1,-1}-\Id)\tau_N u)](x) \psi(x) \d S(x),\label{eq:residual_D} 
\end{align}
for $\psi\in H^{-1/2}(\Gamma)$, and
\begin{align}
r_N(\phi) :=& \int_{\Gamma}[\OW \OI_h^{p,0}(\tau_D u)
+ (\frac{1}{2}\Id +\OK')\OI_h^{p-1,-1}\tau_N u](x) \phi(x) \d S(x)\nonumber\\
=& \int_{\Gamma}[\OW (\OI_h^{p,0}-\Id)(\tau_D u)
+ (\frac{1}{2}\Id +\OK')(\OI_h^{p-1,-1}-\Id)\tau_N u](x) \phi(x) \d S(x), 
\label{eq:residual_N}
\end{align}
for $\phi\in H^{1/2}(\Gamma)$. 

Next step is to bound the residual as in \eqref{eq:EstimateIdea}. To do this we 
need the following results.
\begin{assumption}\label{assumption_Gamma}
The following properties can hold for a surface $\Gamma$.
  \begin{enumerate}
     \item $\Gamma$ is the surface of a polyhedron. Its mesh $\mathcal{G}$ 
     consist of plane panels with straight edges with meshwidth $h>0$.
    \item $\Gamma$ is a curved surface. Then we need certain geometric assumptions, 
    see \cite[Assumption 4.3.18]{Sauter_Schwab}.
  \end{enumerate}
\end{assumption}
\begin{theorem}[{\cite[Theorem 4.3.20]{Sauter_Schwab}}]\label{thm_interpol_discontinuous}
  Let $\Gamma$ fulfill either of the conditions in Assumption~\ref{assumption_Gamma}.
  Then, we have for the interpolation $\OI_h^{p-1,-1}: L^2(\Gamma) \to \mathcal{
  S}_{\GGG}^{p-1,-1}$ and $0\leq t\leq s \leq p$ and all $u\in H^s_{\text{pw}}(\Gamma)$ 
  (where this space is as defined in 
  \cite[Definition 4.1.48]{Sauter_Schwab}) the estimate
  \begin{equation}\label{eq_interpol_discontinuous}
    \|u - \OI_h^{p-1,-1}u\|_{H^{-t}(\Gamma)}\leq C h^{s+t}\|u\|_{H^s(\Gamma)}.
  \end{equation}
\end{theorem}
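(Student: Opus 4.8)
The plan is to reduce the statement to a purely local $L^2$ approximation estimate on a single panel and then to pass to the negative‑order norms by an Aubin--Nitsche duality argument. First I would treat the case $t=0$. Since $\mathcal{S}_{\GGG}^{p-1,-1}$ is fully discontinuous across element boundaries, the operator $\OI_h^{p-1,-1}$ decouples over the mesh: on each panel $\tau\in\GGG_h$ it is the $L^2(\tau)$-orthogonal projection onto $\mathbb{P}^{\Delta}_{p-1}$. Pulling $\tau$ back to a fixed reference panel $\hat\tau$ through the chart $\chi_\tau$, and absorbing the surface Jacobian (Gram determinant) of $\chi_\tau$ — which the geometric hypotheses of \cite[Assumption~4.3.18]{Sauter_Schwab} keep bounded above and below uniformly in $\tau$ for curved $\Gamma$, and which is constant in the polyhedral case — this projection is again a projection onto polynomials that reproduces $\mathbb{P}^{\Delta}_{p-1}$. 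Because $s\le p$, this is enough polynomial reproduction to apply the Bramble--Hilbert lemma at smoothness $s$ (in its fractional Sobolev--Slobodeckij form when $s\notin\NN$), giving $\norm{u-\OI_h^{p-1,-1}u}{L^2(\hat\tau)}\le C\,\abs{u}_{H^s(\hat\tau)}$. Transforming back to $\tau$ and using shape regularity and quasi-uniformity to bound the derivatives of $\chi_\tau$ yields $\norm{u-\OI_h^{p-1,-1}u}{L^2(\tau)}\le C\,h^{s}\norm{u}{H^s(\tau)}$, and summing the squares over $\tau\in\GGG_h$ produces the global bound $\norm{u-\OI_h^{p-1,-1}u}{L^2(\Gamma)}\le C\,h^{s}\norm{u}{H^s_{\pw}(\Gamma)}$.

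For $0<t\le s$ I would argue by duality. Taking $L^2(\Gamma)$ as pivot space so that $H^{-t}(\Gamma)=\bigl(H^{t}(\Gamma)\bigr)'$, write
\[
  \norm{u-\OI_h^{p-1,-1}u}{H^{-t}(\Gamma)}
  =\sup_{0\neq v\in H^{t}(\Gamma)}\frac{\abs{\dual{u-\OI_h^{p-1,-1}u}{v}}}{\norm{v}{H^{t}(\Gamma)}}.
\]
Since $\OI_h^{p-1,-1}$ is the $L^2(\Gamma)$-orthogonal projection onto $\mathcal{S}_{\GGG}^{p-1,-1}$, the error is $L^2$-orthogonal to that subspace, so the numerator equals $\dual{u-\OI_h^{p-1,-1}u}{v-\OI_h^{p-1,-1}v}$. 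Cauchy--Schwarz and two applications of the $L^2$ estimate from the first step — once to $u$ with exponent $s$ and once to $v$ with exponent $t$ (legitimate since $t\le p$ and $\norm{v}{H^t_{\pw}(\Gamma)}\le\norm{v}{H^t(\Gamma)}$) — bound it by $C\,h^{s}\norm{u}{H^s_{\pw}(\Gamma)}\cdot C\,h^{t}\norm{v}{H^t(\Gamma)}$; dividing by $\norm{v}{H^t(\Gamma)}$ and taking the supremum gives the claim. Alternatively, one may establish the estimate for integer $s,t$ first and recover the non-integer cases at the end by real interpolation of the operator $\Id-\OI_h^{p-1,-1}$ between the relevant Sobolev spaces.

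I expect the routine parts to be the affine/quasi-uniform scaling bounds on $\chi_\tau$ and the $\ell^2$ summation over panels. The two genuinely delicate points are: the fractional Bramble--Hilbert step, which needs either a direct argument for Sobolev--Slobodeckij seminorms or an interpolation-space argument arranged so that the constant stays independent of $h$ and of $\tau$; and the curved-surface case, where one has to verify that the chart maps distort the relevant fractional norms by only $h$-independent factors — exactly what the geometric conditions of \cite[Assumption~4.3.18]{Sauter_Schwab} are designed to ensure. A smaller point is to set up the duality pairing rigorously, i.e.\ to make sure $H^{\pm t}(\Gamma)$ form a genuine dual pair on the piecewise smooth surface; this constrains how large $t$ (hence $p$) may be, but such a constraint is already built into the hypotheses of the theorem.
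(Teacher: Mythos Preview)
The paper does not supply its own proof of this theorem: it is quoted verbatim as \cite[Theorem~4.3.20]{Sauter_Schwab} and used as a black box in the proof of Theorem~\ref{thm:convergence_Laplace}. Your outline---panelwise $L^2$-projection plus Bramble--Hilbert for $t=0$, followed by an Aubin--Nitsche duality step exploiting the $L^2$-orthogonality of $\OI_h^{p-1,-1}$ to reach negative $t$---is the standard argument and matches what one finds in the cited reference, so there is nothing to compare against here beyond noting that your sketch is sound.
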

\begin{theorem}[{\cite[Theorem 4.3.22b]{Sauter_Schwab}}]
Let $\Gamma$ fulfill either of the conditions in Assumption~\ref{assumption_Gamma}. 
Let $u\in H^s(\Gamma)$ for some $1<s\leq p+1$. Then, for any $0\leq t \leq 1$, we 
have
\begin{equation}\label{eq_interpol_continuous}
  \|u \ \OI_h^{p,0}u\|_{H^t(\Gamma)} \leq C h^{s-t}\|u\|_{H^s(\Gamma)}.
\end{equation}
\end{theorem}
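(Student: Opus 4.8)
The statement is cited from \cite{Sauter_Schwab}; I sketch the standard argument. The plan is the classical \emph{localise--scale--reassemble} recipe for interpolation error estimates, followed by an operator-interpolation step to cover the whole fractional range $0\le t\le1$.

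First I would reduce everything to a single reference panel. Since $s>1$, the Sobolev embedding $H^{s}(\Gamma)\hookrightarrow C^{0}(\Gamma)$ holds (for $\Gamma$ a curve in $\R^{2}$ or a surface in $\R^{3}$), so point values are meaningful and $\OI_{h}^{p,0}u$ is well defined. Pulling a generic panel $\tau\in\GGG_{h}$ back to the reference triangle $\widehat\tau$ through the chart $\chi_{\tau}$, the associated reference nodal interpolant $\widehat{\OI}$ is bounded on $H^{s}(\widehat\tau)$ and reproduces $\mathbb{P}^{\Delta}_{p}$; the hypothesis $s\le p+1$ guarantees that $\widehat{\OI}$ reproduces in particular every polynomial of degree $<s$. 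The generalised Bramble--Hilbert (Deny--Lions) lemma then yields, for $m\in\{0,1\}$ and every $\widehat v\in H^{s}(\widehat\tau)$, the estimate $\|\widehat v-\widehat{\OI}\widehat v\|_{H^{m}(\widehat\tau)}\le C\,|\widehat v|_{H^{s}(\widehat\tau)}$.

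Next I would transport this back to $\tau$ and sum. Under the hypotheses of Assumption~\ref{assumption_Gamma} on $\Gamma$ (for curved $\Gamma$ the quantitative control of $\chi_{\tau}$ and its derivatives from \cite[Assumption 4.3.18]{Sauter_Schwab}), the change of variables produces the usual scalings of the Sobolev (semi)norms under $\chi_{\tau}$, giving on each panel $\|u-\OI_{h}^{p,0}u\|_{H^{m}(\tau)}\le C\,h^{s-m}\|u\|_{H^{s}(\tau)}$ for $m\in\{0,1\}$. Squaring, summing over all $\tau\in\GGG_{h}$, using $\sum_{\tau}\|u\|_{H^{s}(\tau)}^{2}\lesssim\|u\|_{H^{s}(\Gamma)}^{2}$, and noting that $\OI_{h}^{p,0}u\in C^{0}(\Gamma)$ so that the $H^{1}$-seminorm is additive over the mesh, I obtain the two endpoint bounds $\|u-\OI_{h}^{p,0}u\|_{L^{2}(\Gamma)}\le C\,h^{s}\|u\|_{H^{s}(\Gamma)}$ and $\|u-\OI_{h}^{p,0}u\|_{H^{1}(\Gamma)}\le C\,h^{s-1}\|u\|_{H^{s}(\Gamma)}$.

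Finally, to reach an arbitrary $0\le t\le1$, I would regard $v\mapsto v-\OI_{h}^{p,0}v$ as a linear map that is bounded $H^{s}(\Gamma)\to L^{2}(\Gamma)$ with norm $\lesssim h^{s}$ and $H^{s}(\Gamma)\to H^{1}(\Gamma)$ with norm $\lesssim h^{s-1}$, and invoke real interpolation together with $[L^{2}(\Gamma),H^{1}(\Gamma)]_{t,2}=H^{t}(\Gamma)$ to conclude that the norm is $\lesssim h^{s(1-t)}h^{(s-1)t}=h^{s-t}$, which is the claim. I expect the scaling step to be the main obstacle: on a curved surface the charts $\chi_{\tau}$ are not affine, so tracking the powers of $h$ in the fractional Sobolev--Slobodeckij seminorms (and handling the $L^{2}$-part of the norm separately from the top seminorm) requires precisely the geometric bookkeeping encoded in Assumption~\ref{assumption_Gamma}; the remaining steps are routine.
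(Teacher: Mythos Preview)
Your sketch is correct and follows the standard textbook route (localise to a reference element, apply Bramble--Hilbert/Deny--Lions, scale back with the chart bounds from Assumption~\ref{assumption_Gamma}, sum, then interpolate the target Sobolev index). Note, however, that the paper does not supply its own proof of this statement at all: the theorem is merely quoted from \cite[Theorem~4.3.22b]{Sauter_Schwab} as a tool for the subsequent residual estimates, so there is nothing in the paper to compare your argument against beyond the citation itself. Your outline is precisely the argument one finds in that reference.
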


\begin{theorem}\label{thm:convergence_Laplace}
Let $q_N^j, j= 1,\dots, N_0$, be a set of nodal basis functions of $\calS^{p-1,
-1}(\GGG_h)$, and let $\varphi_N^i, i = 1,\dots N_1$, be a set of nodal basis 
functions of $\calS^{p,0}(\GGG_h)$. Let $r_D$ and $r_N$ be as introduced in
\eqref{eq:residual_D} and \eqref{eq:residual_N}, and define 
  \begin{equation*}
  \vect{\rho}_D:= [r_D(q_N^j)]_{j=1}^{N_0}, 
  \qquad \vect{\rho}_N:= [r_N(\varphi_N^i)]_{i=1}^{N_1}. 
\end{equation*}
Then, it holds that
\begin{empheq}[box=\fbox]{align*}
\norm{\vect{\rho}_D}{\infty} = \mathcal{O}(h^{p+\frac{1}{2}+\frac{d}{2}}),\quad 
  &\norm{\vect{\rho}_N}{\infty} = \mathcal{O}(h^{p-\frac{1}{2}+\frac{d}{2}}),\\
  \norm{\vect{\rho}_D}{2} = \mathcal{O}(h^{p-\frac{1}{2}+\frac{d}{2}}),\quad &
  \norm{\vect{\rho}_N}{2} = \mathcal{O}(h^{p-\frac{3}{2}+\frac{d}{2}}).
\end{empheq}
\end{theorem}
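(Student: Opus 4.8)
The plan is to instantiate the generic estimate \eqref{eq:EstimateIdea} in the concrete situation of \eqref{eq:calderonLaplaceext}, where the relevant Hilbert spaces are $X_0=H^{1/2}(\Gamma)$ and $X_1=H^{-1/2}(\Gamma)$, dual to each other through the $L^2(\Gamma)$ pairing. Denote by $e_D$ and $e_N$ the square-bracketed boundary functions appearing in the second lines of \eqref{eq:residual_D} and \eqref{eq:residual_N}, so that $r_D(\psi)=\int_\Gamma e_D\,\psi\,\d S$ and $r_N(\phi)=\int_\Gamma e_N\,\phi\,\d S$. The first step is to bound $e_D$ in $H^{1/2}(\Gamma)$ and $e_N$ in $H^{-1/2}(\Gamma)$. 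By the triangle inequality together with the continuity of the Laplace boundary integral operators on the energy trace spaces, namely $\OK\colon H^{1/2}(\Gamma)\to H^{1/2}(\Gamma)$, $\OV\colon H^{-1/2}(\Gamma)\to H^{1/2}(\Gamma)$, $\OW\colon H^{1/2}(\Gamma)\to H^{-1/2}(\Gamma)$ and $\OK'\colon H^{-1/2}(\Gamma)\to H^{-1/2}(\Gamma)$, this reduces to estimating the interpolation errors $\norm{(\OI_h^{p,0}-\Id)\tau_D u}{H^{1/2}(\Gamma)}$ and $\norm{(\OI_h^{p-1,-1}-\Id)\tau_N u}{H^{-1/2}(\Gamma)}$. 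Because $u$ is a manufactured exterior solution assembled from fundamental solutions, its Cauchy data $\tau_D u$ and $\tau_N u$ are smooth on $\Gamma$, so \eqref{eq_interpol_continuous} may be invoked with $t=\tfrac12$, $s=p+1$ and \eqref{eq_interpol_discontinuous} of Theorem~\ref{thm_interpol_discontinuous} with $t=\tfrac12$, $s=p$; each contributes the rate $h^{p+1/2}$. Hence $\norm{e_D}{H^{1/2}(\Gamma)}=\O(h^{p+1/2})$ and $\norm{e_N}{H^{-1/2}(\Gamma)}=\O(h^{p+1/2})$.

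Next I would pass to the residual vectors. The $H^{1/2}(\Gamma)$--$H^{-1/2}(\Gamma)$ duality gives $\abs{r_D(\psi)}\le\norm{e_D}{H^{1/2}(\Gamma)}\norm{\psi}{H^{-1/2}(\Gamma)}$ and $\abs{r_N(\phi)}\le\norm{e_N}{H^{-1/2}(\Gamma)}\norm{\phi}{H^{1/2}(\Gamma)}$, so it remains to control $\norm{q_N^j}{H^{-1/2}(\Gamma)}$ and $\norm{\varphi_N^i}{H^{1/2}(\Gamma)}$ for the individual nodal basis functions. On a shape-regular, quasi-uniform family a scaling argument on the reference panel gives, uniformly in $i,j$, $\norm{q_N^j}{L^2(\Gamma)}\simeq h^{(d-1)/2}$ and $\norm{\varphi_N^i}{L^2(\Gamma)}\simeq h^{(d-1)/2}$. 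Combining the latter with the inverse inequality $\norm{v_h}{H^{1/2}(\Gamma)}\le Ch^{-1/2}\norm{v_h}{L^2(\Gamma)}$ on $\calS^{p,0}(\GGG_h)$ yields $\norm{\varphi_N^i}{H^{1/2}(\Gamma)}\le Ch^{(d-2)/2}$, and a direct scaling (equivalently Fourier-side) computation shows that the $H^{-1/2}(\Gamma)$ norm of a bump concentrated on a patch of diameter $\sim h$ gains a further factor $h^{1/2}$ over its $L^2$ norm, i.e.\ $\norm{q_N^j}{H^{-1/2}(\Gamma)}\le Ch^{d/2}$. Multiplying out, $\norm{\vect{\rho}_D}{\infty}\le Ch^{p+1/2}\cdot h^{d/2}$ and $\norm{\vect{\rho}_N}{\infty}\le Ch^{p+1/2}\cdot h^{(d-2)/2}$, which are the two supremum-norm estimates in the boxed statement.

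For the Euclidean norms I would simply use $\norm{\vect{\rho}}{2}\le\sqrt{N}\,\norm{\vect{\rho}}{\infty}$, with $N=\dim\calS^{p-1,-1}(\GGG_h)$ for $\vect{\rho}_D$ and $N=\dim\calS^{p,0}(\GGG_h)$ for $\vect{\rho}_N$. On a quasi-uniform mesh of the $(d-1)$-dimensional surface $\Gamma$ both of these dimensions are $\O(h^{-(d-1)})$, and since $d\in\{2,3\}$ this is $\O(h^{-2})$, so $\sqrt{N}=\O(h^{-1})$. This loses exactly one power of $h$ compared with the supremum-norm bounds, giving $\norm{\vect{\rho}_D}{2}=\O(h^{p-1/2+d/2})$ and $\norm{\vect{\rho}_N}{2}=\O(h^{p-3/2+d/2})$, as claimed.

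The step I expect to be the main obstacle is the sharp scaling $\norm{q_N^j}{H^{-1/2}(\Gamma)}\le Ch^{d/2}$: the naive bound $\norm{q_N^j}{H^{-1/2}(\Gamma)}\le\norm{q_N^j}{L^2(\Gamma)}=\O(h^{(d-1)/2})$ is half a power of $h$ too weak and would degrade $\norm{\vect{\rho}_D}{\infty}$ to only $\O(h^{p+d/2})$, so the improvement in negative order is genuinely needed and must be proved carefully, with attention to uniformity over the mesh family and, for curved $\Gamma$, to the geometric hypotheses of Assumption~\ref{assumption_Gamma} (conceivably at the price of an innocuous logarithmic factor when $d=2$). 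A milder companion issue is to pick the Sobolev indices in \eqref{eq_interpol_continuous} and \eqref{eq_interpol_discontinuous} so that the full smoothness of the manufactured Cauchy data is used; the rest is bookkeeping of powers of $h$.
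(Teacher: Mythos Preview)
Your argument is correct and follows the same overall scheme as the paper: instantiate \eqref{eq:EstimateIdea}, use continuity of the BIOs together with the interpolation estimates \eqref{eq_interpol_discontinuous} and \eqref{eq_interpol_continuous} to obtain the common factor $h^{p+1/2}$, then bound the fractional norms of the individual basis functions, and finally pass from $\ell^\infty$ to $\ell^2$ via $\sqrt{N}$.

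The only substantive difference is in how the basis function norms are handled. You bound $\norm{\varphi_N^i}{H^{1/2}(\Gamma)}$ by $L^2$-scaling plus an inverse inequality, and $\norm{q_N^j}{H^{-1/2}(\Gamma)}$ by an appeal to a Fourier/scaling heuristic that you yourself flag as the delicate step. The paper avoids this hand-waving by exploiting the ellipticity of $\OV$ and $\OW$: it writes $\norm{q_N^j}{H^{-1/2}(\Gamma)}^2\simeq\int_\Gamma \OV(q_N^j)\,q_N^j\,\d S$ and then scales the explicit kernel $\tfrac{1}{4\pi\|x-y\|}$ to the reference element, which gives $\O(h^3)$ directly in $d=3$ (and similarly $h^{d}$ in general); the analogous computation with $\OW$ and the surface curl yields $\norm{\varphi_N^i}{H^{1/2}(\Gamma)}\approx h^{d/2-1}$. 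This energy-norm route is precisely the ``careful proof'' of the $H^{-1/2}$ gain that you anticipate needing, and it sidesteps any boundary-Fourier machinery or logarithmic worries. Either path lands on the same exponents.
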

\begin{proof}
From Equations \eqref{eq:residual_D}, \eqref{eq:residual_N}, we get
\begin{align*}
  |r_D| \leq& [\,\|\frac{1}{2}\Id + \OK\|_{H^{1/2}(\Gamma)\to H^{1/2}(\Gamma)}
  \| (\OI_h^{p,0} -\Id)\tau_D u\|_{H^{1/2}(\Gamma)}\\&
  +\|\OV\|_{H^{-1/2}(\Gamma)\to H^{1/2}(\Gamma)}\|(\OI_h^{p-1,
  -1}-\Id)\tau_N u\|_{H^{-1/2}(\Gamma)}]\|\psi\|_{H^{-1/2}(
  \Gamma)},\\
  |r_N|\leq& [\,\|\OW\|_{H^{1/2}(\Gamma)\to H^{-1/2}(\Gamma)}\|
  (\OI_h^{p,0} -\Id)\tau_D u\|_{H^{1/2}(\Gamma)}\\&+\|\frac{1}{2}\Id 
  -\OK'\|_{H^{-1/2}(\Gamma)\to H^{-1/2}(\Gamma)}\|(\OI_h^{p-1,
  -1}-\Id)\tau_N u\|_{H^{-1/2}(\Gamma)}]\|\phi\|_{H^{1/2}
  (\Gamma)}.
\end{align*}
Using the continuity of the BIOs and Theorems 4.3.20
and 4.3.22b from \cite{Sauter_Schwab}, this can be rewritten as
\begin{align}\label{eq:bound_residual}
  |r_D|\leq c_D h^{p+\frac{1}{2}}\|\psi\|_{H^{-1/2}(\Gamma)},\quad|r_N|
  \leq c_R h^{p+\frac{1}{2}}\|\phi\|_{H^{1/2}(\Gamma)}
\end{align}
for some $h$-independent constants $c_D,c_R>0$.
  
Now we replace $\psi$ with $q_N^j$ and $\phi$ with $\varphi_N^i$. We illustrate 
the procedure for $d=3$. Firstly, 
let $\vect{\pi}\in \GGG_h$ be a panel of diameter $h$. Then, for the reference 
triangle $\hat{K}$, we have some fixed reference shape function $\hat{q}^j$, 
which is the pullback of the basis function $q^j_N$ to $\hat{K}$,
such that
\begin{align*}
\norm{q_N^j}{H^{-1/2}(\Gamma)}^2 \simeq& \int_\Gamma \OV(q^j_N)(x)
q_N^j(x) \d S(x) = 
\int_{\vect{\pi}}\int_{\vect{\pi}}\frac{1}{4\pi \|x-y\|}q_N^j(y)q_N^j(x) 
\d S(x)\d S(y)\\ =& h^{2\cdot 2}\int_{\hat{K}}\int_{\hat{K}} \frac{1}{4
\pi h \|s-t\|}\hat{q}^j(s) \hat{q}^j(t)\d t\d s =\mathcal{O}(h^3),
\end{align*}
where $\simeq$ denotes equality up to a constant, which in this case follows 
from $\OV$ being continuous and elliptic in $H^{-1/2}(\Gamma)$.
Similar arguments work for $d=2$. Hence, we have that
\begin{equation*}
\|q_N^j\|_{H^{-1/2}(\Gamma)}\approx h^{\frac{d}{2}}\quad\text{on }\GGG_h, 
\end{equation*}
with constants independent of $h$.
  Secondly, we have that
\begin{align*}
\|\varphi_N^i\|^2_{H^{1/2}(\Gamma)}\simeq& \int_{\Gamma} (\OW 
\varphi_N^i)(x) \varphi_N^i(x)\d S(x) \\=& 
\int_{\vect{\pi}}\int_{\vect{\pi}}\frac{1}{4\pi \|x-y\|}\curl_{\Gamma} 
\varphi_N^i(y)\cdot\curl_{\Gamma} \varphi_N^i(x) \d S(x)\d S(y).
\end{align*}
Thus, using that the curl scales like $h$ under scaling pullback to the 
reference triangle $\hat{K}$, the same arguments as before yield
\begin{equation*}
\|\varphi_N^i\|_{H^{1/2}(\Gamma)}\approx h^{\frac{d}{2}-1}\quad
\text{on }\GGG_h.
\end{equation*}
Combining this with \eqref{eq:bound_residual} gives the result. 
\end{proof}

%%%%%%%%%%%%%%%%%%%%%%%%%%%%%%%%%%%%%%%%%%%%%%%%%%%%%%%%%%%
\subsection{Implementation and sanity checks}
\label{ssec:LapIR}

We calculate the residuals for some Examples using the library \texttt{Bempp}\footnote{\url{https://bempp.com/}}. 
We use the following solutions to 
$ -\Delta u = 0$ on some domain $\Omega$:
\begin{itemize}
 \item \textbf{Example 1a:} Let $\Omega$ be the unit ball in $\RR^3$, then we 
 consider the exact solution on $\RR^3\setminus\overline{\Omega}$ (in spherical coordinates) 
 $$u(r, \theta, \phi) = r^{-2} e^{i\phi}P_1^1(\cos \theta),$$ 
 where $P_1^1(x) = -(1-x^2)^{1/2} \frac{\d}{\d x}(P_1(x))$, and $P_1(x)$ 
 is the first degree Legendre polynomial. Then the Dirichlet and Neumann traces 
 are, respectively, given by
\begin{align*}
  \tau_D u(r,\theta, \phi) &= r^{-2} e^{i \phi}P_1^1(\cos \theta)|_{\Gamma}\\
  \tau_N u(r,\theta, \phi ) &=  \frac{\partial}{\partial r} u(r,\theta,\phi) 
  |_{\Gamma}= -2 r^{-3}e^{i\phi} P_1^1(\cos \theta)|_{\Gamma}.   
\end{align*}

 \item \textbf{Example 1b:} Let $\Omega$ be the unit ball in $\RR^3$, then we 
 consider the exact solution (in spherical coordinates) 
 $$u(r, \theta, \phi) = r e^{i\phi}P_1^1(\cos \theta),$$ 
 with $P_1^1(x)$ as before. Then the Dirichlet and Neumann traces 
 are, respectively, given by
\begin{align*}
  \tau_D u(r,\theta, \phi) &= r e^{i \phi}P_1^1(\cos \theta)|_{\Gamma}\\
  \tau_N u(r,\theta, \phi ) &=  \frac{\partial}{\partial r} u(r,\theta,\phi) 
  |_{\Gamma}= e^{i\phi} P_1^1(\cos \theta).   
\end{align*}

 \item \textbf{Example 2:} Let $\Omega$ be the unit cube in $\RR^3$. We assume the 
 exact solution is given by 
 $$u(x,y,z) = x^2 - \frac{1}{2}y^2 -\frac{1}{2}z^2.$$ 
Then the Dirichlet and Neumann trace are given by
\begin{equation*}
  \tau_D u = u|_{\Gamma}, \quad \tau_N u = 
  \nabla u|_{\Gamma} \cdot \Vn = \begin{pmatrix}
                                       2x\\-y\\-z
                                      \end{pmatrix}|_{\Gamma}\cdot \Vn .
\end{equation*}
Here, $\Vn$ is the outward pointing normal vector.
\end{itemize}

Our motivation to consider these three examples as ``canonical examples'' is that 
they will give us information about the performance of the Calder\'on residuals 
for exterior problems (Example 1a) vs. interior problems (Examples 1b and 2); and 
extra regular solutions on a sphere (Examples 1a and 1b) vs. standard solution 
on a square (Example 2).

For our implementation, we use the lowest order spaces for all of the 
approximations, i.e., $\calS^{1,0}(\GGG_h)\subset H^{1/2}(\Gamma)$ 
(piecewise linear functions) and $\calS^{0,-1}(\GGG_h)\subset H^{-1/2}
(\Gamma)$ (piecewise constant functions) for some mesh $\GGG_h$. We construct 
the Galerkin matrices for our boundary integral operators as usual.

%%%%%%%%%%%%%%%%%%%%%%%%%%%%%%%%%%%%%%%%%%%%%%%%%%%%%%%%%%%
\subsubsection{Empiric convergence of norms of basis functions}

\begin{table}[h!]
\begin{center}
\begin{tabular}{|c|c|c|}\hline
 \# elements &  $max(diag(\MV))$ & $max(diag(\MW))$\\
 \hline
128   & 1.261e-02 & 2.488e-01\\\hline
 512   & 1.797e-03 & 1.460e-01\\\hline
 2048  & 2.325e-04 & 7.70e-02\\\hline
 8192 & 2.932e-05& 3.904e-02\\\hlinewd{1pt}
observed conv. rate & \green{3.00} & \green{0.92}\\ \hline
expected conv. rate & 3 & 1\\\hline
\end{tabular}
\caption{Maximal element on diagonal of $\MV$ and $\MW$ to numerically 
compute the norm 
of piecewise constant and piecewise linear basis functions.} 
\label{tab:Bempp_pwc_pwl_convergence}
\end{center}
\end{table}

First, we validate the predicted convergence rates of scalar basis functions.
In particular, we compute the $H^{-1/2}(\Gamma)$-norm for piecewise 
constants $q_N^j$, and the $H^{1/2}(\Gamma)$-norm for piecewise linears 
$\varphi_N^i$. Since the weakly singular $\OV$ and hypersingular boundary integral 
operator $\OW$ induce energy norms on $H^{-1/2}(\Gamma)$ and $H^{\frac{1}{2}
}(\Gamma)$, respectively, we calculate $\norm{q_N^j}{H^{-1/2}(\Gamma)}$ 
and $\norm{\varphi_N^i}{H^{1/2}(\Gamma)}$ by inspecting the diagonals 
of their Galerkin matrices $\MV$ and $\MW$. Note that this approach gives a 
measure of the square of the norms of the basis functions, and thus we compare 
them with two times the computed convergence estimate. 
The results are reported in Table~\ref{tab:Bempp_pwc_pwl_convergence}. There, 
we see that the observed convergence 
rates for all of the basis functions are close to the expected ones. Here, 
the observed convergence rates are labelled 'observed conv. rate' 
and were computed using polyfit and taking the logarithm of the values 
reported in the columns.

%%%%%%%%%%%%%%%%%%%%%%%%%%%%%%%%%%%%%%%%%%%%%%%%%%%%%%%%%%%
\subsubsection{Checking convergence of Calder\'on residuals}

Tables~\ref{tab:TestCase2_sphere}, \ref{tab:TestCase2_sphere_interior} and 
\ref{tab:TestCase3_cube} show the obtained results with \texttt{Bempp} for 
Examples 1a, 1b and 2, respectively. %, which are also summarized in Figure~\ref{fig:LaplaceBempp}. 
In all Examples, the observed convergence is larger than expected, but this, 
of course, does not contradict the theory. Moreover, in light of the rates 
from Table~\ref{tab:Bempp_pwc_pwl_convergence}, we have reason to believe this 
arises from the inequalities used to derive the estimate being too crude, and 
not from a mistake in the computed rates. We point out, however, that although 
our estimates are correct, this lack of sharpness may hamper the 
effectivity of Calder\'on residuals as debugging tool, which is what we will 
study in the next section. 

\begin{table}[h!]
\begin{center}
\begin{tabular}{|c|c|c|c|c|}\hline
 \# elements & $\|\Vrho_D\|_{\infty}$ &  $\|\Vrho_D\|_{2}$& $\|\Vrho_N\|_{
 \infty}$& $\|\Vrho_N\|_{2}$\\
 \hline
 128  & 1.0025e-03& 7.6619e-03  & 7.2558e-03  & 3.3546e-02 \\\hline
 512  & 8.0813e-05& 1.1109e-03 & 6.5674e-04& 6.1615e-03\\\hline
 2048 & 5.9248e-06& 1.5188e-04 & 4.2410e-05 & 8.3545e-04 \\\hline
 8192 & 3.8476e-06& 1.9611e-05& 2.6599e-06 & 1.0904e-04\\\hlinewd{1pt}
 observed conv. rate & \green{3.89}& \green{2.95}& \green{3.92} & \green{2.84} \\ \hline
 expected conv. rate &3 &2& 2&1 \\ \hline
\end{tabular}
\caption{Calder\'on residuals %with \texttt{Bempp} 
for Laplace BVP, considering Example 1a on the unit sphere.} 
\label{tab:TestCase2_sphere}
\end{center}
\end{table}

\begin{table}[h!]
\begin{center}
\begin{tabular}{|c|c|c|c|c|}\hline
 \# elements & $\|\Vrho_D\|_{\infty}$ &  $\|\Vrho_D\|_{2}$& $\|\Vrho_N\|_{
 \infty}$& $\|\Vrho_N\|_{2}$\\
 \hline
 128  & 5.4551e-04 & 3.7834e-03& 2.6058e-03 & 1.3876e-02\\\hline
 512  & 4.1249e-05 & 5.3052e-04& 2.1362e-04 & 2.2848e-03\\\hline
 2048 & 2.6771e-06 & 6.8635e-05& 1.5945e-05& 3.1756e-04\\\hline
 8192 & 1.7098e-07 & 8.6937e-06& 1.6783e-06 & 4.2566e-05\\\hlinewd{1pt}
 observed conv. rate & \green{3.99}& \green{3.004}& \green{3.65} & \green{2.87} \\ \hline
 expected conv. rate &3 &2& 2&1 \\ \hline
\end{tabular}
\caption{Calder\'on residuals %with \texttt{Bempp} 
for Laplace BVP, considering Example 1b on the unit sphere.} 
\label{tab:TestCase2_sphere_interior}
\end{center}
\end{table}

\begin{table}[h!]
\begin{center}
\begin{tabular}{|c|c|c|c|c|}\hline
\# elements & $\|\Vrho_D\|_{\infty}$&  $\|\Vrho_D\|_{2}$&$\|\Vrho_N\|_{\infty}$
 &$\|\Vrho_N\|_{2}$\\
 \hline
 84  & 7.2364e-04 & 2.8750e-03 & 8.7829e-04&2.5842e-03\\\hline
 336  & 3.9060e-05 & 2.4859e-04 & 2.3016e-04&9.9100e-04\\\hline
 1344 & 2.7451e-05 & 2.3896e-05 & 2.0798e-05&1.6799e-04\\\hline
 5376 & 1.8921-e06& 2.2553e06 & 2.4550e-06&2.9649e-05\\\hlinewd{1pt}
 observed conv. rate & \green{3.95}& \green{3.43}& \green{2.89} & \green{2.19}\\ \hline
 expected conv. rate &3 & 2 &2 &1 \\ \hline
\end{tabular}
\caption{Calder\'on residuals %with \texttt{Bempp} 
for Laplace BVP, considering Example 2 on the unit cube.} 
\label{tab:TestCase3_cube}
\end{center}
\end{table}

\newpage
%%%%%%%%%%%%%%%%%%%%%%%%%%%%%%%%%%%%%%%%%%%%%%%%%%%%%%%%%%%
\subsection{Numerical results when introducing artificial errors}
\label{ssec:ErrorsLap}

We test how robust our method is by introducing artificial errors in the 
Galerkin matrices of the weakly singular $\OV$ and hypersingular boundary 
integral operator $\OW$. We skip the double layer operators for brevity.

For each test, we calculate the Calder\'on residuals, compare them with the 
standard \emph{method of manufactured solutions (MMS)} convergence test. In 
other words, we do the following convergence test: (i) We build the 
\emph{correct} Galerkin matrices and store them as $\MV, \MK$ and $\MW^m$, 
where $\MW^m$ is the Galerkin matrix corresponding to the modified hypersingular 
boundary integral operator from \cite[Equation 7.26]{Steinbach} with $\overline{
\alpha}=1$. We also build the mass matrix $\MM$ arising from $\Id$; (ii) we 
introduce an artificial error in the computation of the matrices of $\OV$ or 
$\OW$, and store these \emph{defective} Galerkin matrices as $\MV^e$ and 
$\MW^{m,e}$; (iii) We use CG with relative tolerance equal to $10^{-10}$ (or a 
direct solver when CG fails to converge\footnote{Using a direct solver is in 
our case only necessary when the introduced artificial error breaks some 
fundamental 'nice' property of the system matrix, like positive definiteness.}) 
to numerically solve the following linear systems 
\begin{align}
\MV^e \vec{w} &= (\frac{1}{2}\MM + \MK)(\OI_h^{1,0}\tau_D u),\\
\MW^{m,e} \vec{v} &= (\frac{1}{2}\MM - \MK')(\OI_h^{0,-1}\tau_N u),
\end{align}
for the coefficient vectors $\vec{w}$ and $\vec{v}$; (iv) We measure the 
\emph{MMS errors}
\begin{align}
\mathsf{e}_N&:=\|\tau_N u - w_h\|_{H^{-1/2}(\Gamma)}
\approx [(\OI_h^{0,-1}\tau_N u - \vec{w})^{\top} \MV (\OI_h^{0,-1}\tau_N u - 
\vec{w})]^{\frac{1}{2}}, \label{eq:conv_test_1}\\
\mathsf{e}_D&:=\|\tau_D u - v_h\|_{H^{1/2}(\Gamma)}
\:\quad\approx [(\OI_h^{1,0}\tau_D u - \vec{v})^{\top} \widetilde{\MW} (\OI_h^{1,0}
\tau_D u - \vec{v})]^{\frac{1}{2}},\label{eq:conv_test_2}
\end{align}
where $w_h$ is the boundary element function with coefficients $\vec{w}$ and
$v_h$ is the one corresponding to the coefficients $\vec{v}$, i.e. 
$w_h=\sum_i w_i \beta_i$ and  $v_h=\sum_i v_i b_i$;
and $\widetilde{\MW}$ is the Galerkin matrix of the hypersingular 
integral operator for the Helmholtz operator with imaginary wavenumber;
(v) Finally, we repeat steps (i)-(iv) for a sequence of dyadically refined 
meshes and see if this behaves like $h^{1/2}$ for \eqref{eq:conv_test_1} 
and like $h^{3/2}$ for \eqref{eq:conv_test_2}, where $h$ is the meshwidth 
as before.

We make this comparison to contextualize the performance of the Calder\'on 
residuals and assess if the information we get is meaningful.
Since the Galerkin matrices for Example 1a and 1b are the same, and 
Tables~\ref{tab:TestCase2_sphere} and \ref{tab:TestCase2_sphere_interior}
confirm that Calder\'on residuals behave qualitatively the same for an 
exterior BVP and its interior counterpart, we only test Examples 1a and 
2 in this subsection. We report the convergence rate computed with 
linear regression and taking the logarithm of the values reported in the columns. 
This is reported for all quantities in the additional row `ooc'. The last 
row, labelled `eoc' corresponds to the expected convergence rate and is 
provided to make the comparison easier. Given that sometimes $\mathsf{e}_N$ 
and $\mathsf{e}_D$ do not decay monotonically as expected, all tables where 
these errors are measured include the columns labelled `ooc $\mathsf{e}_i$', 
$i=D,N$, which contain the observed convergence rate calculated using 
the error of two consecutive meshsizes. In this way, the unexpected behaviours 
become evident. In the situations where the rate computed with linear regression
and the one calculated between two consecutive meshes disagree, we consider 
the latter to be more relevant.

%%%%%%%%%%%%%%%%%%%%%%%%%%%%%%%%%%%%%%%%%%%%%%%%%%%%%%%%%%%
\subsubsection{Inaccurate quadrature}

We adapt the quadrature for Example 1a and Example 2. Standard parameters in 
\texttt{Bempp} are order 4 for both the singular and the regular quadrature. 
These are adapted to 1 for the singular and 2 for the regular quadrature, 
respectively. Table~\ref{tab:TestCase1a-adapting_quadrature} shows that 
for the exterior extra regular problem (Example 1a), the \emph{MMS} test 
from \eqref{eq:conv_test_1}-\eqref{eq:conv_test_2} gives values in the range of 
machine precision, while the \emph{Calder\'on residual} test obtains observed 
convergence rates that are still close to or even higher than the expected ones. 
For the interior problem (Example 2), we see in 
Table~\ref{tab:TestCase2-adapting_quadrature} that the situation is a bit more 
complicated. For the \emph{MMS} test we have that 
$\mathsf{e}_N$ converges with the expected rate and $\mathsf{e}_D$ does not.
In this case, the \emph{Calder\'on residual} test fails for both $\Vrho_D$ 
and $\Vrho_N$. It is therefore not clear if we can use Calder\'on 
residuals to identify if the quadrature is precise enough. 

\setlength{\tabcolsep}{4pt} % Default value: 6pt
\begin{table}[H]
\begin{center}\footnotesize
\begin{tabular}{|c|c|c|c|c|c|c|c|c|}\hline
N & $\|\Vrho_D\|_{\infty}$&  $\|\Vrho_D\|_{2}$&$\|\Vrho_N\|_{\infty}$
&$\|\Vrho_N\|_{2}$ & $\mathsf{e}_N$, tol=$10^{-10}$ &ooc $\mathsf{e}_N$
&$\mathsf{e}_D$, tol=$10^{-10}$ & ooc $\mathsf{e}_D$ \\ \hline
128  & 2.090e-02& 1.518e-01  & 1.571e-02& 8.548e-02 &4.784e-08&
&1.322e-07& \\\hline
512  & 3.153e-03& 4.038e-02  & 2.967e-03& 2.587e-02&4.265e-08&  \color{gray}{0.58}$^*$
&9.098e-08& \color{gray}{2.83}$^*$\\\hline
2048 & 4.322e-04& 1.027e-02  & 4.650e-04& 7.388e-03&2.699e-08& \color{gray}{0.48}$^*$
&6.546e-08& \color{gray}{-2.95}$^*$\\\hline
8192 & 5.480e-05& 2.576e-03 &  6.426e-05& 1.968e-03&4.235e-08& \color{gray}{-1.28}$^*$
&1.587e-07& \color{gray}{2.02}$^*$\\\hlinewd{1pt}
ooc & \green{2.94}& \green{2.02}& \green{2.72} & \green{1.86}&\color{gray}{0.12}$^*$ 
&  &\color{gray}{-0.03}$^*$& \\ \hline
eoc  &3 & 2 &2 &1 &0.5&0.5&1.5&1.5\\ \hline
\end{tabular}
\caption{Convergence results with \texttt{Bempp} for Laplace BVP, considering 
Example 1a on the unit sphere and using a less accurate quadrature. \\
$^*$: Note that the values of $\mathsf{e}_N$ and $\mathsf{e}_D$ are in range 
of machine precision, hence the observed rate may be meaningless.} 
\label{tab:TestCase1a-adapting_quadrature}
\begin{tabular}{|c| c|c| c|c|c|c|c|c|}\hline
N & $\|\Vrho_D\|_{\infty}$&  $\|\Vrho_D\|_{2}$&$\|\Vrho_N\|_{\infty}$
&$\|\Vrho_N\|_{2}$ & $\mathsf{e}_N$, tol=$10^{-10}\,$ &ooc $\mathsf{e}_N$
&$\mathsf{e}_D$, tol=$10^{-10}$ & ooc $\mathsf{e}_D$\\ \hline
84  & 3.833e-02 & 1.572e-01  &2.134e-01 &6.977e-01&3.231e-01& &3.544e-01&\\\hline
336  & 1.341e-02& 8.898e-02  & 6.814e-02& 3.937e-01&2.166e-01& 0.58 &2.211e-01&0.68\\\hline
1344 &3.671e-03 & 4.733e-02  & 1.995e-02 &2.084e-01&1.290e-01& 0.75 &1.247e-01& 0.83 \\\hline
5376 & 9.534e-04 & 2.439e-02 & 5.688e-03 & 1.072e-01&7.199e-02& 0.84&6.728e-02&0.90\\\hlinewd{1pt}
ooc &\red{1.79} &\red{0.90} & \red{1.75} & \orange{0.90}&\green{0.72}& 
&\red{0.80}&\\ \hline
eoc  &3 & 2 &2 &1 &0.5&0.5&1.5&1.5\\ \hline
\end{tabular}
\caption{Convergence results with \texttt{Bempp} for Laplace BVP, considering 
Example 2 on the unit cube and using a less accurate quadrature.} 
\label{tab:TestCase2-adapting_quadrature}
\end{center}
\end{table}

%%%%%%%%%%%%%%%%%%%%%%%%%%%%%%%%%%%%%%%%%%%%%%%%%%%%%%%%%%%
\subsubsection{Perturbing entries in $\MV$}
We investigate what happens when we introduce the following artificial errors:
\begin{itemize}
\item \textsf{[Error A]} replacing the diagonal of $\MV$ with random numbers 
between 0 and 10,
\item \textsf{[Error B]} scaling the diagonal of $\MV$ with a factor $\frac{1}{2}$,
\item \textsf{[Error C]} scaling the diagonal of $\MV$ with a factor $10^3$,
\item \textsf{[Error D]} scaling the first half of the diagonal of $\MV$ with 
a factor $10^3$,
\item \textsf{[Error E]} scaling the diagonal of $\MV$ with a factor $h^{-1}$,
where $h$ is the meshwidth.
\end{itemize}
In this section, the residual norms for $\rho_N$ are not calculated, since they 
are not affected when only adapting $\MV$.\\

\noindent\textbf{Error A:}
Table~\ref{tab:ErrorA_V} reports the residuals $\rho_D$ in infinity and Euclidean 
norm when \emph{replacing the diagonal of $\MV$ with random numbers}. They also 
contain the corresponding errors from the MMS test, as described in 
\eqref{eq:conv_test_1}. We see that, in both tables, the Calder\'on residuals 
have convergence rates that are much lower than the predicted rates. But
the MMS test for Example 1a tells us that we do have convergence. The MMS test 
for Example 2 agrees with the convergence of the residuals. Note that this is 
only one instance of replacing the diagonal of $\MV$ with random numbers.\\

\setlength{\tabcolsep}{6pt} % Default value: 6pt
\begin{table}[h!]
\begin{center}
\begin{tabular}{|c|c|c|c|c|}\hline
\multicolumn{5}{|c|}{Example 1a (unit sphere)}\\ \hline
\# elements & $\|\Vrho_D\|_{\infty}$&  $\|\Vrho_D\|_{2}$& $\mathsf{e}_N$, 
tol=$10^{-10}$ &ooc $\mathsf{e}_N$\\
 \hline
 128  & 2.0714e00& 1.1570e01  &8.3901e-01&  \\\hline
 512  & 2.0233e00& 2.2174e01  &3.0698e-01& 1.45 \\\hline
 2048 & 1.9910e00& 4.2904e01  &2.3534e-01& 0.38\\\hline
 8192 & 1.9948e00& 8.5089e01 &4.4270e-02 & 2.41 \\\hlinewd{1pt}
 ooc& \red{0.02}& \red{-0.99}& \green{1.3}& \\ \hline
 eoc &3 & 2 &0.5  &0.5\\ \hline
\end{tabular}\vspace{0.05in}
\begin{tabular}{|c|c|c|c|c|}\hline
\multicolumn{5}{|c|}{Example 2 (unit cube)}\\ \hline
\# elements & $\|\Vrho_D\|_{\infty}$&  $\|\Vrho_D\|_{2}$& $\mathsf{e}_N$, 
tol=$10^{-10}$ &ooc $\mathsf{e}_N$\\ \hline
84  & 1.9899e00& 5.3918e00  & 8.0563e-01&\\\hline
336  & 1.9840e00& 1.0520e01  & 8.6620e-01& -0.10\\\hline
1344 & 1.9899e00& 2.1601e01  & 8.7636e-01& -0.02\\\hline
5376 & 1.9991e00& 4.2109e01 & 8.7828e-01& -0.003\\\hlinewd{1pt}
ooc & \red{-0.002}& \red{-0.99}&\red{-0.04}& \\ \hline
eoc &3 & 2 &0.5  &0.5\\ \hline
\end{tabular}
\end{center}
\caption{Convergence results with \texttt{Bempp} for one instance of the Laplace 
BVP and introducing \textsf{Error A} (replacing 
diagonal entries of $\MV$ with random numbers). }
\label{tab:ErrorA_V}
\end{table}

\begin{table}[h!]
\begin{center}
\begin{tabular}{|c|c|c|c|c|}\hline
\multicolumn{5}{|c|}{Example 1a (unit sphere)}\\ \hline
\# elements & $\|\Vrho_D\|_{\infty}$&  $\|\Vrho_D\|_{2}$& $\mathsf{e}_N$, direct 
& ooc $\mathsf{e}_N$\\
 \hline
 128  & 1.1163e-02 & 7.8460e-02 &5.0246e-07&  \\\hline
 512  & 1.5458e-03 & 1.9421e-02 & 1.9938e-07& 1.33\\\hline
 2048 & 2.0229e-04 & 4.7858e-03 &1.1956e-07&0.74\\\hline
  8192& 2.5254e-05& 1.1857e-03 &1.6658e-07& -0.48\\\hlinewd{1pt}
 ooc & \green{3.01}& \green{2.07}&\green{0.55}&\\ \hline
 eoc &3 & 2 &0.5  &0.5\\ \hline
\end{tabular}\vspace{0.05in}
\begin{tabular}{|c|c|c|c|c|}\hline
\multicolumn{5}{|c|}{Example 2 (unit cube)}\\ \hline
\# elements & $\|\Vrho_D\|_{\infty}$&  $\|\Vrho_D\|_{2}$& $\mathsf{e}_N$, 
direct &ooc $\mathsf{e}_N$\\
 \hline
  84  & 4.1716e-02& 1.7853e-01  & 4.3362e00& \\\hline
 336  &1.3861e-02 & 9.5410e-02  & 3.0417e00& 0.51\\\hline
 1344 &3.7277e-03 & 4.9033e-2  & 1.9252e00& 0.66\\\hline
 5376 & 9.6080e-04 & 2.4821e-02 & 2.2635e00& -0.23 \\\hlinewd{1pt}
 ooc & \red{1.82} & \red{0.95} & \red{0.35}& \\ \hline
 eoc &3 & 2 &0.5  &0.5\\ \hline
\end{tabular}
\end{center}
\caption{Convergence results with \texttt{Bempp} for Laplace BVP and introducing 
\textsf{Error B} (scaling diagonal of $\MV$ with factor $\frac{1}{2}$). Here, the 
\emph{MMS} test is ran with a direct solver.}
\label{tab:ErrorB_V}
\end{table}

\noindent\textbf{Error B:} Table~\ref{tab:ErrorB_V} shows the results when 
scaling the diagonal of $\MV$ with a factor $\frac{1}{2}$.
In this case, CG does not converge, because the introduced error breaks 
the positive definiteness of the matrix. We refer the reader to Appendix~\ref{app:eig} 
for further details. In order to still test what happens when the nice 
properties of the matrix are broken, we resorted to using a direct solver 
instead. This substantial change in the matrix might also 
influence that the Calder\'on residuals pass the test, but the MMS test 
from \eqref{eq:conv_test_1} gives inconsistent results. From this example, we 
conclude that before implementing any of the two studied convergence tests, the 
practitioner should indeed check that the obtained matrix is symmetric positive 
definite, as we know it should be from the theory.\\

\noindent\textbf{Error C:} Table~\ref{tab:ErrorC_V} displays the results when 
scaling the diagonal of $\MV$ with a factor $10^{3}$. At first glance, there seems 
to be an inconsistency between the Calder\'on residual test and the MMS 
test for Example 1a. However, the values of $\mathsf{e}_N$
are in the range of machine precision. Hence, calculating a rate here may not 
make sense. Indeed, if we consider that having all errors within machine precision 
already means convergence, then both tests agree. For Example 2, we see that again 
the Calder\'on residuals do not detect the error, but this time, the MMS 
really does. One reason why this may be the case, is that the Calder\'on residual 
tests will fail to see errors whose influence decays faster than the convergence rate.
This is the motivation behind \textsf{Error E} later, which seeks to test this 
hypothesis.\\

\begin{table}[h!]
\begin{center}
\begin{tabular}{|c|c|c|c|c|}\hline
\multicolumn{5}{|c|}{Example 1a (unit sphere)}\\ \hline
\# elements & $\|\Vrho_D\|_{\infty}$&  $\|\Vrho_D\|_{2}$& $\mathsf{e}_N$, 
tol=$10^{-10}$ &ooc $\mathsf{e}_N$\\\hline
128  & 2.3134e01  & 1.4726e02 & 2.0481e-08 & \\\hline
512  & 3.0233e00  & 3.7342e01 & 2.9949e-08 & \color{gray}{-0.55}$^*$\\\hline
2048 & 3.9756e-01 & 9.3715e00 & 3.0299e-08 & \color{gray}{-0.02}$^*$ \\\hline
8192 & 4.9968e-02 & 2.3452e00 & 4.2499e-08 &\color{gray}{-0.50}$^*$ \\\hlinewd{1pt}
ooc & \green{3.03}& \green{2.05}& \color{gray}{-0.33}$^*$&\\ \hline
eoc &3 & 2 &0.5  &0.5\\ \hline
\end{tabular}\vspace{0.05in}
\begin{tabular}{|c|c|c|c|c|}\hline
\multicolumn{5}{|c|}{Example 2 (unit cube)}\\ \hline
\# elements & $\|\Vrho_D\|_{\infty}$&  $\|\Vrho_D\|_{2}$& $\mathsf{e}_N$, 
tol=$10^{-10}$ &ooc $\mathsf{e}_N$\\\hline
84   & 1.3152e01 & 4,2215e01  & 8.7817e-01 & \\\hline
336  & 1.6526e00 & 1.0596e01  & 8.7556e-01 & 0.004 \\\hline
1344 &2.0858e-01 & 2.6702e00  & 8.7045e-01 & 0.01 \\\hline
5376 &2.6567e-02 & 6.7815e-01 & 8.6049e-01 & 0.02\\\hlinewd{1pt}
ooc &\green{2.98} & \green{1.97}&\red{0.01}& 
\\ \hline
eoc&3 & 2 &0.5  &0.5\\ \hline
\end{tabular}
\end{center}
\caption{Convergence results with \texttt{Bempp} for Laplace BVP and introducing 
\textsf{Error C} (scaling diagonal of $\MV$ with factor $10^3$).\\
$^*$: Note that the values of $\mathsf{e}_N$ for Example 1a are in range 
of machine precision, hence the observed rate may be meaningless.}
\label{tab:ErrorC_V}
\end{table}

\noindent\textbf{Error D:} Table~\ref{tab:ErrorD_V} summarizes the results when 
scaling the first half of the diagonal of $\MV$ with a factor $10^{3}$. We see 
in both examples that the convergence rate of Calder\'on residuals is 
approximately equal to the expected one, and that the MMS test tells us 
that we do not have convergence. The observed 
behaviour is similar to what we obtained for \textsf{Error C}. We will check in 
the next experiment if we can improve the agreement between the two tests by 
introducing an error that is $h$-dependent.\\

\begin{table}[h!]
\begin{center}
\begin{tabular}{|c|c|c|c|c|}\hline
\multicolumn{5}{|c|}{Example 1a (unit sphere)}\\ \hline
\# elements & $\|\Vrho_D\|_{\infty}$&  $\|\Vrho_D\|_{2}$& $\mathsf{e}_N$, 
tol=$10^{-10}$ &ooc $\mathsf{e}_N$\\ \hline
128  & 2.3135e01  & 1.0413e02 & 9.8562e-09 &  \\\hline
512  & 3.0233e00  & 2.6404e01 & 5.9872e-08 & \color{gray}{-2.60}$^*$ \\\hline
2048 & 3.9756e-01 & 6.2667e00 & 4.7220e-08 &  \color{gray}{0.34}$^*$\\\hline
8192 & 4.9968e-02 & 1.6583e00 & 4.2289e-08 & \color{gray}{0.16}$^*$ \\\hlinewd{1pt}
ooc & \orange{2.95}& \green{2.0}& \color{gray}{-0.6}$^*$& 
\\ \hline
eoc &3 & 2 &0.5  &0.5\\\hline
\end{tabular}\vspace{0.05in}
\begin{tabular}{|c|c|c|c|c|}\hline
\multicolumn{5}{|c|}{Example 2 (unit cube)}\\ \hline
\# elements & $\|\Vrho_D\|_{\infty}$&  $\|\Vrho_D\|_{2}$& $\mathsf{e}_N$, 
tol=$10^{-10}$ &ooc $\mathsf{e}_N$\\ \hline
84   & 1.3104e01  & 3.4270e01  & 7.5909e-01 & \\\hline
336  & 1.6380e00  & 8.5672e00  & 7.4645e-01 & 0.02 \\\hline
1344 & 2.0475e-01 & 2.1418e00  &7.3703e-01 & 0.02 \\\hline
5376 & 2.5593e-02 & 5.3545e-01 & 7.2576e-01 & 0.02\\\hlinewd{1pt}
ooc &\green{3.00} &\green{2.00} & \red{0.02}& 
\\ \hline
eoc &3 & 2 &0.5  &0.5\\ \hline
\end{tabular}
\end{center}
\caption{Convergence results with \texttt{Bempp} for Laplace BVP and introducing 
\textsf{Error D} (scaling first half of the diagonal of $\MV$ with factor $10^3$).\\
$^*$: Note that the values of $\mathsf{e}_N$ for Example 1a are in range 
of machine precision, hence the observed rate may be meaningless.}
\label{tab:ErrorD_V}
\end{table}

\begin{table}[h!]
\begin{center}
\begin{tabular}{|c|c|c|c|c|}\hline
\multicolumn{5}{|c|}{Example 1a (unit sphere)}\\ \hline
\# elements & $\|\Vrho_D\|_{\infty}$&  $\|\Vrho_D\|_{2}$& $\mathsf{e}_N$, 
tol=$10^{-10}$ &ooc $\mathsf{e}_N$\\ \hline
 128  & 2.9101e-02 & 1.7769e-01 & 2.0110e-08&  \\\hline
 512  & 1.2081e-02 & 1.4881e-01 & 4.2862e-08& \color{gray}{-1.09}$^*$ \\\hline
 2048 & 3.8291e-03 & 9.0247e-02 & 4.3826e-08& \color{gray}{ -0.03}$^*$\\\hline
 8192 & 1.0467e-03& 4.9126e-02 & 6.0315e-08& \color{gray}{-0.46}$^*$\\\hlinewd{1pt}
 ooc& \red{1.65}& \red{0.65}& \color{gray}{-0.49}$^*$& 
 \\ \hline
 eoc &3 & 2 &0.5  &0.5\\ \hline
\end{tabular}\vspace{0.05in}
\begin{tabular}{|c|c|c|c|c|}\hline
\multicolumn{5}{|c|}{Example 2 (unit cube)}\\ \hline
\# elements & $\|\Vrho_D\|_{\infty}$&  $\|\Vrho_D\|_{2}$& $\mathsf{e}_N$, 
tol=$10^{-10}$ &ooc $\mathsf{e}_N$\\ \hline
  84  &6.6493e-02 & 2.4767e-01  & 3.4313e-01&  \\\hline
 336  & 2.0875e-02& 1.3496e-01  &3.8876e-01&-0.18 \\\hline
 1344 &5.5837e-03 &7.0075e-02   &4.0989e-01& -0.008 \\\hline
 5376 & 1.4376e-03& 3.5667e-02 & 4.2018e-01& -0.04\\\hlinewd{1pt}
 ooc & \red{1.85}& \red{0.93}&\red{-0.10}& 
 \\ \hline
 eoc &3 & 2 &0.5  &0.5\\ \hline
\end{tabular}
\end{center}
\caption{Convergence results with \texttt{Bempp} for Laplace BVP and introducing 
\textsf{Error E} (scaling diagonal of $\MV$ with factor $h^{-1}$).\\
$^*$: Note that the values of $\mathsf{e}_N$ for Example 1a are in range 
of machine precision, hence the observed rate may be meaningless.}
\label{tab:ErrorE_V}
\end{table}

\newpage
\noindent\textbf{Error E:}
Table~\ref{tab:ErrorE_V} shows the results when scaling the diagonal of $\OV$ 
with a factor $h^{-1}$. We see that in both Examples the convergence rate 
of the Calder\'on residuals is below the expected one, and that the energy 
norm test tells us that we do not expect convergence. But, as we have seen before, 
the values of  $\mathsf{e}_N$ for Example 1a are again in the range of machine 
precision, which means that we have convergence. Hence, for Example 2, the 
two tests agree and are able to detect that there is a problem, but for 
Example 1a the tests disagree. 

% \newpage
%%%%%%%%%%%%%%%%%%%%%%%%%%%%%%%%%%%%%%%%%%%%%%%%%%%%%%%%%%%
\subsubsection{Perturbing entries in $\MW$}
We investigate what happens when we introduce the same artificial errors as 
before but now for the hypersingular operator $\OW$, i.e.,:
\begin{itemize}
\item \textsf{[Error A]} replacing the diagonal of $\MW$ with random numbers 
between 0 and 10,
\item \textsf{[Error B]} scaling the diagonal of $\MW$ with a factor $\frac{1}{2}$,
\item \textsf{[Error C]} scaling the diagonal of $\MW$ with a factor $10^3$,
\item \textsf{[Error D]} scaling the first half of the diagonal of $\MW$ with 
a factor $10^3$,
\item \textsf{[Error E]} scaling the diagonal of $\MW$ with a factor $h^{-1}$,
where $h$ is the meshwidth.
\end{itemize}
In this section, the residual norms for $\rho_D$ are not calculated, since they 
are not affected when adapting $\MW$. 

\begin{table}[h!]
\begin{center}
\begin{tabular}{|c|c|c|c|c|}\hline
\multicolumn{5}{|c|}{Example 1a (unit sphere)}\\ \hline
\# elements & $\|\Vrho_N\|_{\infty}$&  $\|\Vrho_N\|_{2}$& $\mathsf{e}_D$, 
tol=$10^{-10}$& ooc $\mathsf{e}_D$\\ \hline
128  &7.5889e-01 & 2.8135e00 & 3.4701e01&  \\\hline
512  & 8.9943e-01 & 6.3053e00& 1.1577e01&  1.58\\\hline
2048 & 9.5124e-01 & 1.3971e01 & 1.7186e03& -7.21 \\\hline
8192 & 9.7547e-01 & 2.854e01 & 6.2075e00& 8.11 \\\hlinewd{1pt}
ooc & \red{-0.12}& \red{-1.15}& \red{0.04}&  \\
\hline
 eoc &2 & 1 &1.5&1.5 \\ \hline
\end{tabular}\vspace{0.05in}
\begin{tabular}{|c|c|c|c|c|}\hline
\multicolumn{5}{|c|}{Example 2 (unit cube)}\\ \hline
\# elements & $\|\Vrho_N\|_{\infty}$&  $\|\Vrho_N\|_{2}$& $\mathsf{e}_D$,  
tol=$10^{-10}$ & ooc $\mathsf{e}_D$ \\ \hline
84  &6.8970e-01 & 1.2594e00& 2.2994e00& \\\hline
336  &8.6444e-01  & 3.1304e00 & 7.9503e00&-1.79 \\\hline
1344 & 8.9094e-01 &6.6847e00 & 7.4968e00&0.08\\\hline
5376 & 9.2668e-01 & 1.3733e01 & 1.3001e01& -0.79\\\hlinewd{1pt}
ooc & \red{-0.13}&\red{-1.1} & \red{-0.7}& 
\\   \hline
 eoc &2 & 1 &1.5&1.5 \\ \hline
\end{tabular}
\end{center}
\caption{Convergence results with \texttt{Bempp} for Laplace BVP and introducing 
\textsf{Error A} (replacing diagonal of $\MW$ with random numbers). }
\label{tab:ErrorA_W}
\end{table}

In Tables~\ref{tab:ErrorA_W}-\ref{tab:ErrorE_W} we see that 
the observed convergence rates for Calder\'on residuals are lower than the 
expected ones, which means the Calder\'on residual test detects all the introduced 
errors that we tried. We see in Tables \ref{tab:ErrorA_W} and \ref{tab:ErrorB_W} 
that the convergence rates for  $\mathsf{e}_D$ in Example 1a are fluctuating. 
This might be because in Table \ref{tab:ErrorA_W}, the introduced error is 
done with random numbers, and it could happen that we get a 'worse' matrix 
for some realizations of this. In Table \ref{tab:ErrorB_W} we have the same 
problem as when we are introducing \textsf{Error B} for $\MV$ in Table 
\ref{tab:ErrorB_V}, where scaling the diagonal of the matrix destroys the 
good properties of the matrix. Furthermore, in Tables \ref{tab:ErrorC_W} 
and \ref{tab:ErrorE_W} for Example 1a, we see that $\mathsf{e}_D$ has 
values in the range of machine precision, so that calculating a 
convergence rate does not give us much information. In all other cases, 
the convergence rate of $\mathsf{e}_D$ is lower than expected, thus 
matching the predictions from the residuals.

\begin{table}[h!]
\begin{center}
\begin{tabular}{|c|c|c|c|c|}\hline
\multicolumn{5}{|c|}{Example 1a (unit sphere)}\\ \hline
\# elements & $\|\Vrho_N\|_{\infty}$&  $\|\Vrho_N\|_{2}$&   $\mathsf{e}_D$,  
tol=$10^{-10}$& ooc $\mathsf{e}_D$\\ \hline
128  & 1.3139e-01 & 8.1555e-01 &1.6947e-06&  \\\hline
512  & 6.7798e-02& 8.0597e-01 & 1.3986e-07& 3.84 \\\hline
2048 &  3.4590e-02& 8.0308e-01 & 6.4068e-07&-2.23\\\hline
8192& 1.7440e-02 & 8.0233e-01 &2.6949e-07& 1.25 \\\hlinewd{1pt}
ooc & \red{1.0} & \red{0.01} & \red{0.58}&  
\\ \hline
eoc &2 & 1 &1.5&1.5 \\ \hline
\end{tabular}\vspace{0.05in}
\begin{tabular}{|c|c|c|c|c|}\hline
\multicolumn{5}{|c|}{Example 2 (unit cube)}\\ \hline
\# elements & $\|\Vrho_N\|_{\infty}$&  $\|\Vrho_N\|_{2}$&  $\mathsf{e}_D$,  
tol=$10^{-10}$& ooc $\mathsf{e}_D$\\ \hline
84  &8.5418e-02 & 2.8696e-01& 3.1694e01&\\\hline
336  &5.2286e-02  & 3.0612e-01 & 3.8364e00&3.05\\\hline
1344 & 2.6438e-02 &3.1681e-01 & 3.4635e00&0.15\\\hline
5376 & 1.3784e-02 & 3.2257e-01 & 1.9136e00&0.86\\\hlinewd{1pt}
ooc &\red{0.89} & \red{-0.06}&\red{1.22}& 
\\\hline
 eoc &2 & 1 &1.5&1.5 \\ \hline
\end{tabular}
\end{center}
\caption{Convergence results with \texttt{Bempp} for Laplace BVP and introducing 
\textsf{Error B} (scaling diagonal of $\MW$ with factor $\frac{1}{2}$). }
\label{tab:ErrorB_W}
\end{table}

\begin{table}[h!]
\begin{center}
\begin{tabular}{|c|c|c|c|c|}\hline
\multicolumn{5}{|c|}{Example 1a (unit sphere)}\\ \hline
\# elements & $\|\Vrho_N\|_{\infty}$&  $\|\Vrho_N\|_{2}$&  $\mathsf{e}_D$,  
tol=$10^{-10}$& ooc $\mathsf{e}_D$ \\ \hline
 128  & 2.6579e02 & 1.6463e03 & 9.6177e-09  &\\\hline
 512  & 1.3648e02 & 1.6128e03 & 3.7631e-08&\color{gray}{-2.10}$^*$\\\hline
 2048 & 6.9163e01& 1.6049e03 & 3.6792e-08& \color{gray}{0.03}$^*$\\\hline
 8192 & 3.4849e01& 1.6031e03 & 2.1073e-08&\color{gray}{0.81}$^*$\\\hlinewd{1pt}
 ooc & \red{1.00}& \red{0.01} & \color{gray}{-0.34}$^*$& \\ \hline
 eoc &2 & 1 &1.5 &1.5\\ \hline
\end{tabular}\vspace{0.05in}
\begin{tabular}{|c|c|c|c|c|}\hline
\multicolumn{5}{|c|}{Example 2 (unit cube)}\\ \hline
\# elements & $\|\Vrho_N\|_{\infty}$&  $\|\Vrho_N\|_{2}$& $\mathsf{e}_D$, 
tol=$10^{-10}$& ooc $\mathsf{e}_D$\\ \hline
 84  &1.7183e02 & 5.7502e02 & 1.1518e00 &\\\hline
 336  & 1.0475e02 & 6.1184e02 & 1.1538e00&-0.003\\\hline
 1344 & 5.2853e01 & 6.3300e02 & 1.1540e00 &-0.0003\\\hline
 5376 & 2.7539e01 & 6.4449e02 & 1.1540e00&0\\\hlinewd{1pt}
 ooc & \red{0.89} & \red{-0.05} & \red{0.00}&\\\hline
 eoc &2 & 1 &1.5 &1.5 \\ \hline
\end{tabular}
\end{center}
\caption{Convergence results with \texttt{Bempp} for Laplace BVP and introducing 
\textsf{Error C} (scaling diagonal of $\MW$ with factor $10^3$). \\
$^*$: Note that the values of $\mathsf{e}_D$ for Example 1a are in range 
of machine precision, hence the observed rate may be meaningless.
}
\label{tab:ErrorC_W}
\end{table}

\begin{table}[h!]
\begin{center}
\begin{tabular}{|c|c|c|c|c|}\hline
\multicolumn{5}{|c|}{Example 1a (unit sphere)}\\ \hline
\# elements & $\|\Vrho_N\|_{\infty}$&  $\|\Vrho_N\|_{2}$&  $\mathsf{e}_D$,  
tol=$10^{-10}$& ooc$\mathsf{e}_D$\\ \hline
 128  &2.6579e02& 1.1272e03 & 1.0095e00 & \\\hline
 512  &1.3648e02 & 1.1320e03 & 9.986e-01& 0.02\\\hline
 2048 &6.9163e01 & 1.1207e03 & 8.966e-01& 0.16\\\hline
 8192 &3.4895e01 &1.1161e03 & 6.266e-01&0.52\\\hlinewd{1pt}
 ooc &\red{1.00} & \red{0.01}& \red{0.23}& \\\hline
 eoc &2 & 1 &1.5&1.5\\ \hline
\end{tabular}\vspace{0.05in}
\begin{tabular}{|c|c|c|c|c|}\hline
\multicolumn{5}{|c|}{Example 2 (unit cube)}\\ \hline
\# elements & $\|\Vrho_N\|_{\infty}$&  $\|\Vrho_N\|_{2}$& $\mathsf{e}_D$,  
tol=$10^{-10}$&ooc $\mathsf{e}_D$\\ \hline
 84  & 1.5644e02& 4.0097e03& 9.4962e-01 & \\\hline
 336  & 9.2386e01 & 4.1266e03 & 9.5179e-01&-0.03 \\\hline
 1344 & 5.2853e01 & 4.5000e03& 9.9276e-01& -0.06\\\hline
 5376 & 2.7539e01 & 4.7043e03 & 1.0096e00& -0.02\\\hlinewd{1pt}
 ooc & \red{0.83}& \red{-0.08} &\red{-0.03}& 
 \\\hline
 eoc &2 & 1 &1.5 &1.5\\ \hline
\end{tabular}
\end{center}
\caption{Convergence results with \texttt{Bempp} for Laplace BVP and introducing 
\textsf{Error D} (scaling first half of diagonal of $\MW$ with factor $10^3$). }
\label{tab:ErrorD_W}
\end{table}

\begin{table}[t]
\begin{center}
\begin{tabular}{|c|c|c|c|c|}\hline
\multicolumn{5}{|c|}{Example 1a (unit sphere)}\\ \hline
\# elements & $\|\Vrho_N\|_{\infty}$&  $\|\Vrho_N\|_{2}$& $\mathsf{e}_D$,  
tol=$10^{-10}$& ooc $\mathsf{e}_D$\\ \hline
 128  & 3.3031e-01 & 2.0450e00 & 2.4194e-08&\\\hline
 512  & 5.4697e-01 & 6.4588e00 & 9.5160e-09& \color{gray}{1.44}$^*$\\\hline
 2048 & 6.6674e-01 & 1.5471e01 & 3.0787e-08& \color{gray}{-1.72}$^*$\\\hline
 8192 & 7.3019e-01 & 3.3589e01 & 2.6546e-08&\color{gray}{0.21}$^*$\\\hlinewd{1pt}
 ooc & \red{-0.38} & \red{-1.4} & \color{gray}{-0.22}$^*$ &
 \\ \hline
 eoc &2 & 1 &1.5&1.5 \\ \hline
\end{tabular}\vspace{0.05in}
\begin{tabular}{|c|c|c|c|c|}\hline
\multicolumn{5}{|c|}{Example 2 (unit cube)}\\ \hline
\# elements & $\|\Vrho_N\|_{\infty}$&  $\|\Vrho_N\|_{2}$& $\mathsf{e}_D$,  
tol=$10^{-10}$& ooc $\mathsf{e}_D$\\ \hline
 84  & 2.3949e-01& 8.0032e-01& 8.080e-01& \\\hline
 336  & 3.9627e-01 & 2.3139e00& 1.0560e00& -0.39\\\hline
 1344 & 4.5266e-01 & 5.4212e00& 1.1301e00&-0.10\\\hline
 5376 & 4.9928e-01 & 1.1684e01 & 1.1483e00&-0.02\\\hlinewd{1pt}
 ooc &\red{-0.34} &\red{-1.28} & \red{-0.16}&
 \\\hline
 eoc &2 & 1 &1.5&1.5 \\ \hline
\end{tabular}
\end{center}
\caption{Convergence results with \texttt{Bempp} for Laplace BVP and introducing 
\textsf{Error E} (scaling diagonal of $\MW$ with factor $h^{-1}$).\\ 
$^*$: Note that the values of $\mathsf{e}_D$ for Example 1a are in range 
of machine precision, hence the observed rate may be meaningless.}
\label{tab:ErrorE_W}
\end{table}

\clearpage
%%%%%%%%%%%%%%%%%%%%%%%%%%%%%%%%%%%%%%%%%%%%%%%%%%%%%%%%%%%%%%%%%%%%%%%%%%%%%%%
\section{Second Application: Time-Harmonic Maxwell's Equations}
\label{ssec:Maxwell}

Let $k>0$, we solve 
\begin{equation*}
  \bcurl\;\bcurl\; u - k^2 u = 0 \quad \text{ in } \RR^3 \setminus 
  \overline{\Omega},
\end{equation*}
with Silver-M\"uller radiation conditions. We will again start by considering 
what are the related functional spaces, for which we use the notation and 
definitions from \cite[Sect.~2]{BuH03}. We will, however, be specific in the 
definition of the traces and BIOs, as there are different conventions in 
the literature and this will slightly change the concrete formula for the 
residuals, but obviously, the idea remains the same. 

The weak solution of this exterior BVP $u\in \BH_{loc}(\curl,\Omega)$, and we 
consider the tangential trace (sometimes referred as \emph{twisted} tangential 
trace) $\gamma_{\times} \,:\,  \BH(\bcurl,\Omega)\to \Hdiv$ and the magnetic trace 
$\gamma_R: \BH_{loc}(\bcurl^2,\R^3\setminus{\Gamma})\to \Hdiv$ defined as
\begin{align*}
 \gamma_{\times} u := u|_\Gamma \times \Vn, \quad \text{ and } \quad \gamma_R u := 
 \gamma_{\times}(\bcurl u),
\end{align*}
respectively, where the spaces are as defined in \cite[Section 2]{BuH03}. Moreover, the related BIOs are 
\begin{align*}
\OE_k := \{\gamma_{\times}\}_{\Gamma}\circ \SLk, \qquad \OH_k := \{\gamma_{\times}\}_{\Gamma}
\circ \DLk,
\end{align*}
where $\SLk$ and $\DLk$ are the single and double layer potentials as introduced 
in \cite[Sect.~2 and 4, Eq.(27)-(28)]{BuH03}. We remark that $\OE_k$ and $\OH_k$ 
are continuous mappings from $\Hdiv$ to $\Hdiv$ when considering the 
\emph{anti-symmetric} duality pairing 
\begin{align}
 \dual{\Vu}{\Vv}_{\times, \Gamma} := \int_{\Gamma} (\Vu\times \Vn) \cdot \Vv dS, 
 \quad \Vu,\Vv \in \mathbf{L}^2_t(\Gamma),
\end{align}
but they map from $\Hdiv$ to $\Hcurl$ when considering the \emph{symmetric} 
duality pairing 
\begin{align}
 \dual{\Vu}{\Vv}_{\Gamma} := \int_{\Gamma} \Vu \cdot \Vv dS, \quad 
 \Vu,\Vv \in \mathbf{L}^2_t(\Gamma).
\end{align}
In the following, we will consider the symmetric duality pairing because that 
is what agrees with the implementations in \texttt{Bempp}. 
In order to test codes following other conventions, one just has to rotate 
the functions when needed, as will be done in the derivations later. However, 
as a consequence of the following Lemma, the rotations do not affect the 
computed convergence rates. 
\begin{lemma}
\label{lemma:equiv_hcurl_hdiv}
For $v\in \Hcurl$ and $\Vn$ the outward pointing normal vector, we have that
\begin{equation}\label{eq:equivalence_curl_div}
    \|v\|_{\Hcurl} = \|\Vn\times v\|_{\Hdiv}.
\end{equation}
\end{lemma}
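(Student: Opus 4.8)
The plan is to exhibit the rotation operator $R\colon v\mapsto \Vn\times v$ as an isometric isomorphism from $\Hcurl$ onto $\Hdiv$; identity \eqref{eq:equivalence_curl_div} is then precisely the statement that $R$ preserves norms. For this I would reduce everything to two facts: (i) $R$ intertwines the scalar surface curl $\bcurl_\Gamma$ with the surface divergence $\div_\Gamma$, so that the ``graph'' contributions to the two norms agree; and (ii) $R$ acts isometrically on the underlying tangential trace space $\mathbf{H}^{-1/2}_\times(\Gamma)$, so that the ``base'' contributions agree as well.

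First I would recall from \cite[Sect.~2]{BuH03} that the two spaces carry the graph norms
\begin{align*}
\|v\|_{\Hcurl}^2 &= \|v\|_{\mathbf{H}^{-1/2}_\times(\Gamma)}^2 + \|\bcurl_\Gamma v\|_{H^{-1/2}(\Gamma)}^2,\\
\|w\|_{\Hdiv}^2 &= \|w\|_{\mathbf{H}^{-1/2}_\times(\Gamma)}^2 + \|\div_\Gamma w\|_{H^{-1/2}(\Gamma)}^2,
\end{align*}
built over the same scalar space $H^{-1/2}(\Gamma)$. For ingredient (i) I would simply use the definition of the scalar surface curl of a tangential field, $\bcurl_\Gamma v = \div_\Gamma(v\times\Vn)$; since $v\times\Vn=-\Vn\times v=-Rv$, this gives $\div_\Gamma(Rv)=-\bcurl_\Gamma v$, and in particular $\|\div_\Gamma(Rv)\|_{H^{-1/2}(\Gamma)}=\|\bcurl_\Gamma v\|_{H^{-1/2}(\Gamma)}$ (the sign being immaterial here).

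For ingredient (ii) I would start at the $\mathbf{L}^2_t(\Gamma)$ level: $R$ maps tangential fields to tangential fields, $R^2=-\Id$ on them, and $\langle Ru,w\rangle_{\mathbf{L}^2_t(\Gamma)}=-\langle u,Rw\rangle_{\mathbf{L}^2_t(\Gamma)}$ because $(\Vn\times u)\cdot w=-u\cdot(\Vn\times w)$ pointwise, so $R$ is orthogonal on $\mathbf{L}^2_t(\Gamma)$ with $R^{-1}=-R$. Since the $\mathbf{H}^{\pm 1/2}_\times(\Gamma)$-norms used in \cite{BuH03} are the ones attached to this $\mathbf{L}^2_t(\Gamma)$-duality, $R$ stays isometric (and bijective) on $\mathbf{H}^{-1/2}_\times(\Gamma)$. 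Combining (i) and (ii),
\begin{align*}
\|Rv\|_{\Hdiv}^2
&= \|Rv\|_{\mathbf{H}^{-1/2}_\times(\Gamma)}^2 + \|\div_\Gamma(Rv)\|_{H^{-1/2}(\Gamma)}^2\\
&= \|v\|_{\mathbf{H}^{-1/2}_\times(\Gamma)}^2 + \|\bcurl_\Gamma v\|_{H^{-1/2}(\Gamma)}^2 = \|v\|_{\Hcurl}^2,
\end{align*}
which is the claim, and the same computation read backwards (using $R^{-1}=-R$) shows that $R$ is onto.

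The step I expect to be the main obstacle is ingredient (ii): it is elementary on $\mathbf{L}^2_t(\Gamma)$ and on flat panels, but on a curved $\Gamma$ the rotation does not commute with surface differentiation (curvature terms enter), so the \emph{exact} equality hinges on the precise normalisation of the $\mathbf{H}^{-1/2}_\times(\Gamma)$-norm adopted in \cite{BuH03}. If one is content with equivalence of norms -- which is all that the convergence-rate estimates actually require -- then (ii) is immediate from the continuity of $R$ and of $R^{-1}=-R$, and the argument above goes through with $\simeq$ in place of $=$.
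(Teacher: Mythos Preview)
Your overall architecture is the paper's: split the norm into its base part and its graph part, use $\div_\Gamma(\Vn\times v)=\bcurl_\Gamma v$ for the graph part, and show the rotation is an isometry on the base space for the other part. The graph part is fine.

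The gap is in ingredient (ii), and you were right to flag it. In the framework the paper adopts (Buffa--Ciarlet, Buffa--Ciarlet--Sheen, Buffa--Hiptmair), the base spaces for $\Hcurl$ and $\Hdiv$ are \emph{not} the same space $\mathbf{H}^{-1/2}_\times(\Gamma)$: one is $H^{-1/2}_T(\Gamma):=(H^{1/2}_R(\Gamma))'$, the other $H^{-1/2}_R(\Gamma):=(H^{1/2}_T(\Gamma))'$, with duality taken via the symmetric $\mathbf{L}^2_t(\Gamma)$ pairing. The reason one gets an \emph{exact} equality is that the $H^{1/2}_R$-norm is \emph{defined} by $\|q\|_{H^{1/2}_R(\Gamma)}:=\|\Vn\times q\|_{H^{1/2}_T(\Gamma)}$, so $R$ is an isometry $H^{1/2}_R\to H^{1/2}_T$ by construction. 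Your $\mathbf{L}^2_t$-orthogonality of $R$ then enters only to identify $R^\star=-R=R^{-1}$, which lets one pass the isometry to the duals by a direct sup computation:
\[
\|v\|_{H^{-1/2}_T}=\sup_{w\in H^{1/2}_R\setminus\{0\}}\frac{|\langle v,w\rangle|}{\|w\|_{H^{1/2}_R}}
=\sup_{u\in H^{1/2}_T\setminus\{0\}}\frac{|\langle Rv,u\rangle|}{\|u\|_{H^{1/2}_T}}=\|Rv\|_{H^{-1/2}_R}.
\]
Arguing purely from $\mathbf{L}^2_t$-orthogonality, without the $H^{1/2}$ step, does not yield the $H^{-1/2}$ isometry: an $\mathbf{L}^2_t$-orthogonal map need not preserve an $H^{1/2}$-type norm, and it is precisely the chosen normalisation of $H^{1/2}_R$ that makes equality (rather than mere equivalence) hold. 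A minor point: the paper uses additive graph norms $\|\cdot\|+\|\cdot\|$ rather than your squared sums, but since you establish equality of each summand separately this costs nothing.
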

The proof can be found in Appendix~\ref{app:Lemma}.

Now, let us return to the problem at hand.
The corresponding Calderón identity for the exterior BVP is given by (see 
\cite[Equation (34)]{BuH03})%\todo{Specific reference})
\begin{align}
\label{eq:calderon}
  \begin{pmatrix}
    \frac{1}{2}\Id-\OH_k & -\OE_k\\
    -\OE_k & \frac{1}{2}\Id - \OH_k
  \end{pmatrix} \begin{pmatrix}
                  \gamma_{\times} u\\ 
                  \gamma_R u
                \end{pmatrix} = \begin{pmatrix}
                  \gamma_{\times} u\\ 
                  \gamma_R u
                \end{pmatrix} \iff \begin{pmatrix}
    \frac{1}{2}\Id+\OH_k & \OE_k\\
    \OE_k & \frac{1}{2}\Id +\OH_k
  \end{pmatrix} \begin{pmatrix}
                  \gamma_{\times} u\\ 
                  \gamma_R u
  \end{pmatrix} =0.
\end{align}

Next, we proceed to discretize. We illustrate the procedure using Raviart-Thomas 
basis functions, and will comment on the difference when using Rao–Wilson–Glisson 
(RWG) basis functions afterwards. 

Let $\GGG_h$ be a given mesh of $\Gamma$. For $\Hdiv$ we take the div-conforming 
lowest order Raviart-Thomas edge elements $\mathcal{RT}^0(\GGG_h)$ as defined 
in \cite{RaT77}, while for $\Hcurl$ we consider the curl-conforming lowest 
order N\'ed\'elec edge elements $\mathcal{N}^0(\GGG_h)$ as introduced in 
\cite[Sect.~7.2]{Hip07}.

Let $\Ph\,: \Hdiv \to \mathcal{RT}^0(\GGG_h)$ is the orthogonal projection 
from \cite[Theorem~14]{BuH03}. Then, we have the following Calder\'on residuals
\begin{align}
  r_{M,1}(\psi) &= \int_{\Gamma} [\OE_k \Ph (\gamma_R u) \nonumber
  +(\frac{1}{2}\Id +\OH_k)\Ph (\gamma_{\times} u)]\psi\; ds(x)  \\
  &= \int_{\Gamma} [\OE_k (\Ph - \Id) (\gamma_R u) 
   +(\frac{1}{2}\Id +\OH_k)(\Ph-\Id) (\gamma_{\times} u)]\psi\; ds(x),  
  %\quad \text{for }\psi \in \Hcurl
  \label{eq:residual_1}\\
  r_{M,2}(\psi) &= \int_{\Gamma} [\OE_k \Ph (\gamma_{\times} u) 
  +(\frac{1}{2}\Id +\OH_k)\Ph (\gamma_R u)]\psi\; ds(x)\nonumber \\
  &= \int_{\Gamma} [\OE_k (\Ph - \Id) (\gamma_{\times} u) 
   +(\frac{1}{2}\Id +\OH_k)(\Ph-\Id) (\gamma_R u)]\psi\; ds(x)  
  %\quad \text{for }\psi \in \Hcurl 
  \label{eq:residual_2}
\end{align}
for $\psi \in \Hcurl$.

Now we can state the desired convergence result.
\begin{theorem}
\label{thm:convergence_maxwell}
Let $\Omega_h$ be a tetrahedral mesh of $\Omega$ and let $\GGG_h$ be a triangular 
surface mesh of $\Gamma$ such that $\GGG_h = \Omega_h|_{\Gamma}$ up to geometric 
approximation of the boundary.
Let $\beta_e, e=1,\dots,N$ be a basis of $\mathcal{N}^0(\GGG_h)$. Let $r_{M,1}$
and $r_{M,2}$ be as introduced in \eqref{eq:residual_1} and \eqref{eq:residual_2}, 
and define
\begin{align*}
\vect{\rho}_{M,1}:= [r_{M,1}(\beta_e)]_{e=1}^{N},\quad \vect{\rho}_{M,2}:= 
[r_{M,2}(\beta_e)]_{e=1}^{N}.
\end{align*}
If the solution $u$ of our exterior BVP is such that $\gamma_R u, 
\gamma_{\times} u\in \BH^1_{\times}(\div_{\Gamma},\Gamma)$, then we have
\begin{empheq}[box=\fbox]{align*}
\|\vect{\rho}_{M,1}\|_{\infty} = \mathcal{O}(h^1),\quad &\|\vect{\rho}_{M,2}
\|_{\infty}=\mathcal{O}(h^1),\\
\|\vect{\rho}_{M,1}\|_{2} = \mathcal{O}(1),\quad &\|\vect{\rho}_{M,2}\|_{2}
=\mathcal{O}(1).
\end{empheq}
\end{theorem}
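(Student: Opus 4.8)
The plan is to follow, step for step, the proof of Theorem~\ref{thm:convergence_Laplace}. Three ingredients are needed: (i) an $h$-explicit bound $|r_{M,j}(\psi)|\le C\,h^{3/2}\,\|\psi\|_{\Hcurl}$ for $j=1,2$; (ii) an estimate of $\|\beta_e\|_{\Hcurl}$ for a single lowest-order N\'ed\'elec basis function on $\GGG_h$; and (iii) the passage from (i) and (ii) to the $\ell^{\infty}$- and $\ell^{2}$-norms of $\vect{\rho}_{M,1},\vect{\rho}_{M,2}$, using $N=\dim\mathcal{N}^0(\GGG_h)=\mathcal{O}(h^{-2})$. I expect (i) and (iii) to be routine and (ii) to be the main obstacle.

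For (i) I would start from the rewritten forms in \eqref{eq:residual_1} and \eqref{eq:residual_2}, where $\Ph$ already appears as $\Ph-\Id$. Estimating the duality pairing against $\psi$ and using the continuity of $\OE_k$ and $\OH_k$ from \cite{BuH03} --- first turning, via Lemma~\ref{lemma:equiv_hcurl_hdiv}, the $\Hcurl$-valued integrand produced by the symmetric-pairing convention of \texttt{Bempp} into an $\Hdiv$-valued one, which changes no norms --- gives
\[
|r_{M,1}(\psi)|\le\big[\,\|\OE_k\|\,\|(\Ph-\Id)\gamma_R u\|_{\Hdiv}+\|\tfrac12\Id+\OH_k\|\,\|(\Ph-\Id)\gamma_{\times}u\|_{\Hdiv}\,\big]\,\|\psi\|_{\Hcurl},
\]
and the analogue for $r_{M,2}$ with $\gamma_{\times}u$ and $\gamma_R u$ interchanged. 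Since, by \cite[Theorem~14]{BuH03}, $\Ph$ is a uniformly $\Hdiv$-bounded projection onto $\mathcal{RT}^0(\GGG_h)$ and hence quasi-optimal, I would bound $\|(\Ph-\Id)v\|_{\Hdiv}$ by the best $\mathcal{RT}^0$-approximation error, using the standard Raviart--Thomas estimates on the geometrically compatible mesh $\GGG_h$ together with an Aubin--Nitsche duality step that trades the $\mathbf{L}^2$-rate for the $\mathbf{H}^{-1/2}$-rate hidden in the $\Hdiv$-norm. Under the hypothesis $\gamma_R u,\gamma_{\times}u\in\BH^1_{\times}(\div_{\Gamma},\Gamma)$ this yields $\|(\Ph-\Id)\gamma_R u\|_{\Hdiv},\,\|(\Ph-\Id)\gamma_{\times}u\|_{\Hdiv}=\mathcal{O}(h^{3/2})$, and hence the claimed bound on $r_{M,1},r_{M,2}$ with $C$ independent of $h$.

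For (ii) I would again invoke Lemma~\ref{lemma:equiv_hcurl_hdiv} to write $\|\beta_e\|_{\Hcurl}=\|\Vn\times\beta_e\|_{\Hdiv}$, where $\Vn\times\beta_e$ is, up to a fixed constant, a lowest-order Raviart--Thomas basis function. Then, exactly as $\|q_N^j\|_{H^{-1/2}(\Gamma)}$ and $\|\varphi_N^i\|_{H^{1/2}(\Gamma)}$ were computed in the proof of Theorem~\ref{thm:convergence_Laplace} through the $\OV$- and $\OW$-induced energy norms, I would use that $\|\cdot\|_{\Hdiv}$ is equivalent, with $h$-independent constants, to the energy norm of the electric field integral operator $\OE_k$ at a purely imaginary wavenumber --- which is $\Hdiv$-elliptic and whose kernel has the same short-range behaviour $\tfrac1{4\pi\|x-y\|}$ as the Laplace single layer. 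Pulling the resulting double integral back to the reference panel then shows that the surface-divergence part of the $\Hdiv$-norm of $\Vn\times\beta_e$ dominates its pure $\mathbf{H}^{-1/2}(\Gamma)$-part: the former's square scales like $h^{-1}$ and the latter's like $h$, whence $\|\beta_e\|_{\Hcurl}\approx h^{-1/2}$ uniformly on $\GGG_h$. Combining (i) and (ii), $|r_{M,j}(\beta_e)|\le C\,h^{3/2}\cdot h^{-1/2}=\mathcal{O}(h)$ for every edge $e$, so $\|\vect{\rho}_{M,1}\|_{\infty}=\|\vect{\rho}_{M,2}\|_{\infty}=\mathcal{O}(h)$; and from $\|\vect{\rho}_{M,j}\|_{2}\le\sqrt{N}\,\|\vect{\rho}_{M,j}\|_{\infty}$ with $\sqrt{N}=\mathcal{O}(h^{-1})$ we get $\|\vect{\rho}_{M,1}\|_{2}=\|\vect{\rho}_{M,2}\|_{2}=\mathcal{O}(1)$.

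The hard part will be (ii): unlike for the Laplacian, there is no single elliptic boundary integral operator whose energy space is exactly $\Hcurl$, so the basis-function estimate must be routed through the rotation lemma and the imaginary-wavenumber EFIE, and one has to track carefully which of the two terms of the $\Hdiv$-norm governs the scaling, since getting this wrong changes the final exponent. A secondary technical point is that the $h^{3/2}$ rate in (i) genuinely needs the full $\BH^1_{\times}(\div_{\Gamma},\Gamma)$-regularity, together with the duality argument for the negative-order part of the $\Hdiv$-norm, and that all the interpolation and projection estimates are applied on a curved surface, which is precisely why the compatibility hypothesis relating $\GGG_h$ to the tetrahedral mesh $\Omega_h$ is imposed.
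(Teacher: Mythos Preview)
Your proof is correct and steps (i) and (iii) match the paper's argument almost verbatim, including the direct appeal to \cite[Theorem~14]{BuH03} for the $h^{3/2}$ projection error. Step~(ii), however, is handled differently. You stay on the surface: after rotating via Lemma~\ref{lemma:equiv_hcurl_hdiv}, you compute $\|\Vn\times\beta_e\|_{\Hdiv}$ through the energy norm of the imaginary-wavenumber EFIE and a pullback-to-reference-element scaling, exactly paralleling how the Laplace proof treated $\|q_N^j\|_{H^{-1/2}}$ and $\|\varphi_N^i\|_{H^{1/2}}$. The paper instead lifts the surface Raviart--Thomas function $\tilde\beta_e=\Vn\times\beta_e$ to a \emph{volume} edge-element basis function $b_e\in\mathcal{ND}_1(\Omega_h)$ with $\tilde\beta_e=\gamma_{\times} b_e$, establishes the equivalence $\|\tilde\beta_e\|_{\Hdiv}\sim\|b_e\|_{H(\bcurl,\Omega)}$ via trace continuity and \cite[Lemma~5.7]{HiM12}, and then quotes the known bound $\|b_e\|_{H(\bcurl,\Omega)}\le c_2 h^{-1/2}$ from \cite{HiM12}.

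Your route is more self-contained and mirrors the scalar case cleanly. The paper's route, on the other hand, is what actually uses the hypothesis $\GGG_h=\Omega_h|_{\Gamma}$: the tetrahedral mesh is needed to define the volume lift $b_e$ and to invoke \cite{HiM12}. In your argument this hypothesis is never touched in step~(ii), and your closing remark attributing it to curved-surface approximation estimates is off --- those already live in \cite[Theorem~14]{BuH03} and need only the surface mesh. So your approach effectively shows the volume-mesh assumption could be dropped, at the price of redoing the basis-function scaling by hand rather than citing an existing volume result.
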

\begin{proof}
We only show the result for $\vect{\rho}_{M,1}$, as $\vect{\rho}_{M,2}$ can be 
obtained analogously. From \eqref{eq:residual_1}, it follows that we have
\begin{align*}
|r_{M,1}|\leq &(\|\OE_k\|_{\Hdiv \to \Hdiv} \|(\Ph -\Id)\gamma_R u\|_{\Hdiv}\\
&+\|\frac{1}{2}\Id +\OH_k\|_{\Hdiv\to\Hdiv}\|(\Ph -\Id)\gamma_{\times} u\|_{\Hdiv})
\|\psi\|_{\Hcurl}
\end{align*}
By continuity and \cite[Thm 14]{BuH03} we get 
\begin{equation}\label{eq:bound_r_M}
|r_{M,1}|\leq c_1 h^{\frac{3}{2}}\|\psi\|_{\Hcurl}
\end{equation}
for some constant $c_1>0$. We are interested in the decay rate when we take 
$\psi = \beta_e$, where $\beta_e$ is a N\'ed\'elec basis function. 
We have that 
$\tilde{\beta_e}=\Vn\times \beta_e$, where $\tilde{\beta}_e$ is a Raviart-Thomas 
basis function. For a domain basis function $b_e$ in $\mathcal{ND}_1(\Omega_h)$ 
(as defined in \cite{HiM12}, with $\tilde{\beta_e} = \gamma_{\times}(b_e)$), it holds 
that $\|\tilde{\beta_e}\|_{\Hdiv}\sim \|b_e\|_{H(\bcurl,\Omega)} $, since
\begin{equation*}
\|\gamma_{\times} b_e\|_{\Hdiv} = \|\tilde{\beta}_e\|_{\Hdiv} 
\leq \|b_e\|_{H(\bcurl,\Omega)},
\end{equation*}
where we used the continuity of $\gamma_{\times}$ in the last inequality.
The other direction follows from \cite[Lemma 5.7]{HiM12}. This equivalence is 
useful, since from Lemma \ref{lemma:equiv_hcurl_hdiv} it holds that $\|\beta_e
\|_{\Hcurl} \sim \|\tilde{\beta}_e\|_{\Hdiv}$ and from \cite{HiM12} 
we know that $\|b_e\|_{H(\bcurl,\Omega)}\leq c_2 h^{-1/2}$ for some 
constant $c_2$. Combining this with \eqref{eq:bound_r_M} gives the result. 
\end{proof}

\begin{remark}
\label{rem:scaling}
Instead of using Raviart-Thomas and N\'ed\'elec basis functions in 
Theorem~\ref{thm:convergence_maxwell}, we could also have chosen RWG and 
\emph{scaled} N\'ed\'elec (SNC) basis functions. Let us denote, for simplicity, 
the basis functions of the Raviart-Thomas, RWG, N\'ed\'elec and scaled 
N\'ed\'elec function spaces by $RT_i$, $RWG_i$, $NC_i$ and $SNC_i$. 
We remind the reader that the following relation holds between 
\cite[Equation (17) and (18)]{SBBSW17}
\begin{align*}
  RWG_i(r) &:= l_i \, RT_i(r),\\
  SNC_i(r) &:= l_i \, NC_i(r),
\end{align*}
where $l_i$ is the length of the edge where the basis function is 
defined. 

This scaling factor means that when using RWG and SNC basis functions, we 
expect the convergence of the residuals to be of \emph{one order higher} than 
what we obtained in Theorem \ref{thm:convergence_maxwell}. 
\end{remark}

%%%%%%%%%%%%%%%%%%%%%%%%%%%%%%%%%%%%%%%%%%%%%%%%%%%%%%%%%%%
\subsection{Implementation and sanity checks}
\label{sssec:MaxwellIR}

We use the following solutions to our exterior Maxwell BVP when $\Omega$ is 
the unit cube in $\RR^3$:
\begin{itemize}
 \item \textbf{Example 3:} We consider the exact solution $U = \vec{p} 
 e^{ik\vec{x} \cdot \vec{d}}$ with $\vec{p} = (1,0,0)^T$, 
 $\vec{d} = (0,0,1)^T$ and $k = 1$ . 
 \item \textbf{Example 4:} We consider 
 $U = (\vec{d}\times(\vec{p}\times\vec{d})) e^{ik\vec{x}
\cdot \vec{d}}$ with $\vec{p} = (1.01, 0, 1.05)^T$,\\ 
$\vec{d} = (0.57735, 0.57735, 0.57735)^T
$ normalized, and $k = 2$. 
\end{itemize}

%%%%%%%%%%%%%%%%%%%%%%%%%%%%%%%%%%%%%%%%%%%%%%%%%%%%%%%%%%%
\subsubsection{Empiric convergence of norms of basis functions}

We validate the predicted convergences rate of vectorial basis functions. 
In particular, we compute the $\Hdiv$-norm for RWG basis functions $\tilde{
\beta}_e$. We do this by inspecting the diagonals of the Galerkin matrices of 
the electric field integral operator $\OE_k$ with wavenumber $k = i$. Note that 
calculating this gives a measure of the square of the norms of the basis 
functions, and thus we compare them with two times the computed convergence 
estimate. In Table~\ref{tab:Bempp_RWGconvergence} we see that the observed 
convergence rates for all of the basis functions are close to the expected ones.  

\begin{table}[h!]
\begin{center}
\begin{tabular}{|c|c|}\hline
 \# elements &  $max(diag(\MEi))$\\
 \hline
 24   & 4.153e-01\\\hline
 96   & 2.373e-01\\\hline
 384  & 1.174e-01\\\hline
 1536 & 5.851e-02\\\hline
6144 & 2.923e-02\\\hlinewd{1pt}
observed conv. rate & \green{0.968}\\ \hline
expected conv. rate & 1 \\\hline
\end{tabular}
\caption{Maximal element on diagonal of $\ME$ with wavenumber $k=i$ to estimate 
convergence of RWG basis functions } 
\label{tab:Bempp_RWGconvergence}
\end{center}
\end{table}

%%%%%%%%%%%%%%%%%%%%%%%%%%%%%%%%%%%%%%%%%%%%%%%%%%%%%%%%%%%
\subsubsection{Checking convergence of Calder\'on residuals}

In this section we verify our estimates for Calder\'on residuals. We
remark that \texttt{Bempp} uses SNC and RWG functions, which we will denote 
by $\mathcal{E}^0(\GGG_h)\subset \Hcurl$ and $\mathcal{E}^0_{\times}(\GGG_h) 
\subset \Hdiv$, respectively. The \texttt{Bempp} results can be found in
Table~\ref{tab:Bempp_Maxwell}. From Remark~\ref{rem:scaling}, we predict to 
have $\|\vect{\rho}_{M,i}\|_{\infty} = \mathcal{O}(h^2)$ and $\|\vect{\rho}_{
M,i}\|_{2} = \mathcal{O}(h)$ for $i=1,2$. We see that for both Examples we 
obtain a convergence rate of approximately one order higher than we 
expected.

\begin{table}[h!]
\begin{center}
\begin{tabular}{|c|c|c|c|c|}\hline
\multicolumn{5}{|c|}{Example 4}\\ \hline
\# elements & $\|\vect{\rho}_{M,1}\|_{2}$ & $\|\vect{\rho}_{M,1}\|_{\infty}$& 
$\|\vect{\rho}_{M,2}\|_{2}$ & $\|\vect{\rho}_{M,2}\|_{\infty}$\\ \hline
 24   & 2.16776e-01 & 5.76130e-02& 3.62201e-01 & 8.23841e-02\\\hline
 96   & 4.64068e-02 & 8.96662e-03&6.41830e-02 & 1.15569e-02\\\hline
 384  & 1.01251e-02 & 1.11766e-03&1.19495e-02 & 1.31189e-03\\\hline
 1536 & 2.37529e-03 & 1.48827e-04& 2.54622e-03 & 1.62684e-04\\\hline
6144 &  5.74156e-04 & 1.91928e-05&  5.90082e-04 & 2.00978e-05\\\hlinewd{1pt}
observed conv. rate &  \green{2.14}& \green{2.90}&  \green{2.32}& \green{3.02}\\\hline
expected conv. rate &1&2 &1&2 \\ \hline
\end{tabular}\vspace{0.05in}
\begin{tabular}{|c|c|c|c|c|}\hline
\multicolumn{5}{|c|}{Example 5}\\ \hline
 \# elements & $\|\vect{\rho}_{M,1}\|_{2}$ & $\|\vect{\rho}_{M,1}\|_{\infty}$& 
 $\|\vect{\rho}_{M,2}\|_{2}$ & $\|\vect{\rho}_{M,2}\|_{\infty}$\\ \hline
 24   & 1.88446e-01 & 7.66625e-02 & 2.86437e-01 & 8.04880e-02\\\hline
 96   & 3.22867e-02 & 5.74773e-03& 4.73946e-02 & 9.75907e-03\\\hline
 384  & 5.69888e-03 & 4.92157e-04& 7.76580e-03 & 8.36725e-04\\\hline
 1536 & 1.19773e-03 & 7.17868e-05& 1.43842e-03 & 7.48340e-05\\\hline
6144 &  2.82986e-04 & 9.43459e-06&  3.08229e-04 & 8.56077e-06\\\hlinewd{1pt}
observed conv. rate &  \green{2.35}& \green{3.23} &  \green{2.48}& \green{3.22}\\\hline
 expected conv. rate &1&2 &1&2  \\ \hline
\end{tabular}
\end{center}
\caption{Calder\'on residuals for Maxwell BVP.}
\label{tab:Bempp_Maxwell}
\end{table}

% \newpage
%%%%%%%%%%%%%%%%%%%%%%%%%%%%%%%%%%%%%%%%%%%%%%%%%%%%%%%%%%%
\subsection{Numerical results when introducing artificial errors}
\label{ssec:ErrorsMax}

As before, for each test, we calculate the Calder\'on residuals, compare them 
with the standard \emph{MMS} convergence test. In other words, we do 
the following convergence test: (i) We build the \emph{correct} Galerkin 
matrices and store them as $\ME$ and $\MH$. We also build the mass matrix 
$\MM$ arising from $\Id$; (ii) we introduce an artificial error in the 
computation of the matrix of $\OE$, and store the \emph{defective} Galerkin 
matrix as $\ME^e$; (iii) We use GMRES with relative tolerance $10^{-10}$.
to numerically solve the following linear systems 
\begin{align}
\ME^e \vec{w} = (\frac{1}{2}\MM - \MH)\tilde{\Ph}\gamma_{\times} u
\end{align}
for the coefficient vectors $\vec{w}$; (iv) We measure the 
\emph{MMS error}
\begin{equation}\label{eq:conv_error_maxwell}
 \mathsf{e}_R:= \|\gamma_R u - w_h\|_{\Hdiv}\approx ((\mathcal{P}_h
 \gamma_R u - \vec{w})^{\top} \ME (\mathcal{P}_h\gamma_R u - \vec{w}))^{\frac{1}{2}};
\end{equation}
(v) Finally, we repeat steps (i)-(iv) for a sequence of dyadically refined 
meshes and see if this behaves like $\O(h^{\frac{3}{2}})$, where $h$ is the 
meshwidth as before. Note that in all tables where we measure $\mathsf{e}_R$, 
the columns labelled `ooc $\mathsf{e}_R$', contain the observed convergence 
rate calculated using the error of two consecutive meshsizes.

%%%%%%%%%%%%%%%%%%%%%%%%%%%%%%%%%%%%%%%%%%%%%%%%%%%%%%%%%%%
\subsubsection{Inaccurate quadrature}
We adapt the quadrature for Example 3. Standard parameters in Bempp are order 4 
for both the singular and the regular quadrature. These are adapted to 1 for 
the singular and 2 for the regular quadrature, respectively. In 
Table~\ref{tab:Bempp_Maxwell6a}, we see that adapting the quadrature does not 
do enough damage to destroy the convergence of the Calder\'on residuals, and 
also the convergence result from the MMS test, i.e., from calculating 
\eqref{eq:conv_error_maxwell}, gives us a higher rate than expected. 

\begin{table}[h!]
\begin{center}
\begin{tabular}{|c|c|c|c|c|c|c|}\hline
\# elements & $\|\vect{\rho}_{M,1}\|_{2}$ & $\|\vect{\rho}_{M,1}\|_{\infty}$& 
$\|\vect{\rho}_{M,2}\|_{2}$ &$\|\vect{\rho}_{M,2}\|_{\infty}$& $\mathsf{e}_R$,
tol=$10^{-10}$& ooc $\mathsf{e}_R$\\ \hline
 24   & 2.079e-01& 6.594e-02& 3.301e-01& 7.537e-02& 1.211e00&\\\hline
 96   & 3.709e-02 & 7.372e-03& 5.446e-02 & 9.329e-03& 4.006e-01& 1.60\\\hline
 384  & 8.142e-03& 8.703e-04& 1.004e-02 & 1.045e-03& 1.117e-01&1.84\\\hline
 1536 & 1.897e-03 &1.081e-04& 2.082e-03 & 1.214e-04& 2.952e-02&1.92\\\hline
6144 & 4.574e-04  & 1.400e-05& 4.747e-04   & 1.490e-05 & 7.786e-03&1.92\\\hlinewd{1pt}
ooc & \green{2.19} & \green{3.05} & \green{2.36} & \green{3.09} 
&\green{1.83}& \\ \hline
eoc &1&2 &1&2 & 1.5&1.5\\ \hline
\end{tabular}
\caption{
Calder\'on residuals with \texttt{Bempp} 
for Maxwell BVP, considering Example 3 and using less quadrature points.} 
\label{tab:Bempp_Maxwell6a}
\end{center}
\end{table}

%%%%%%%%%%%%%%%%%%%%%%%%%%%%%%%%%%%%%%%%%%%%%%%%%%%%%%%%%%%
\subsubsection{Adapting entries in $\ME$}
In order to be consistent with the previous numerical experiments, we investigate 
what happens when we introduce the following artificial errors:
\begin{itemize}
\item \textsf{[Error A]} replacing the diagonal of $\ME$ with random numbers 
between 0 and 10,
\item \textsf{[Error B]} scaling the diagonal of $\ME$ with a factor $\frac{1}{2}$,
\item \textsf{[Error C]} scaling the diagonal of $\ME$ with a factor $10^3$,
\item \textsf{[Error D]} scaling the first half of the diagonal of $\ME$ with 
a factor $10^3$,
\item \textsf{[Error E]} scaling the diagonal of $\ME$ with a factor $h^{-1}$,
where $h$ is the meshwidth.
\end{itemize}

\begin{table}[h!]
\begin{center}
\begin{tabular}{|c|c|c|c|c|c|c|}\hline
\# elements & $\|\vect{\rho}_{M,1}\|_{2}$ & $\|\vect{\rho}_{M,1}\|_{\infty}$& 
$\|\vect{\rho}_{M,2}\|_{2}$ & $\|\vect{\rho}_{M,2}\|_{\infty}$ & $\mathsf{e}_R$, 
tol=$10^{-10}$ & ooc $\mathsf{e}_R$ \\ \hline
 24   & 6.713e-01& 2.218e-01&6.543e-01& 2.527e-01&4.998e-01&\\\hline
 96   & 6.565e-01 &1.612e-01&6.215e-01 &1.733e-01&1.525e-01&1.71\\\hline
 384  & 7.217e-01& 8.728e-02& 7.257e-01 &8.791e-02&4.158e-02&1.87\\\hline
 1536 & 7.248e-01 &4.410e-02& 7.063e-01&4.421e-02&1.209e-02&1.78\\\hline
6144 & 7.117e-01  & 2.209e-02& 7.064e-01 &2.209e-02&3.618e-03&1.74 \\\hlinewd{1pt}
ooc & \red{-0.03} & \red{0.85} & \red{-0.04} & \red{0.90}& \green{1.8} &\\ \hline
 eoc &1&2 &1&2 & 1.5&1.5\\ \hline
\end{tabular}
\caption{
Convergence results with \texttt{Bempp} for one instance of the Maxwell BVP 
using Example 3 and introducing \textsf{Error A} (replacing diagonal entries of 
$\MV$ with random numbers). }
\label{tab:ErrorA_E}
\end{center}
\end{table}

\textbf{Error A:} Table \ref{tab:ErrorA_E} shows the results when replacing the 
diagonal of the Galerkin matrix $\ME$ of $\ME$ with random numbers. We see that 
while the observed convergence rate for the Calder\'on residuals is less than 
we expected, the rate from the MMS test in \eqref{eq:conv_error_maxwell} 
is higher than expected. We therefore see that the Calder\'on residual test 
is able to detect the introduced error, while the standard MMSs test 
is not.

\textbf{Error B-D:} Tables~\ref{tab:ErrorB_E}-\ref{tab:ErrorD_E} report the 
results when we multiply the diagonal of $\ME$ by different $h$-independent 
constants. There, the observed convergence rates are everywhere larger than the 
expected rates, which means that both tests agree on their results.

\begin{table}[h!]
\begin{center}
\begin{tabular}{|c|c|c|c|c|c|c|}\hline
\# elements & $\|\vect{\rho}_{M,1}\|_{2}$ & $\|\vect{\rho}_{M,1}\|_{\infty}$& 
$\|\vect{\rho}_{M,2}\|_{2}$ & $\|\vect{\rho}_{M,2}\|_{\infty}$& $\mathsf{e}_R$, 
tol=$10^{-10}$& ooc $\mathsf{e}_R$\\ \hline
  24 & 1.185e-01& 3.399e-02& 2.717e-01& 5.778e-02&7.151e-01& \\\hline
  96 & 2.805e-02 & 4.590e-03& 3.777e-02 & 6.276e-03&1.120e-01 &2.67\\\hline
 384 & 8.793e-03& 8.437e-04& 8.174e-03 & 8.027e-04&3.546e-02 &1.66\\\hline
1536 & 2.608e-03 & 1.205e-04& 2.4311e-03 &1.176e-04&1.236e-02 &1.52\\\hline
6144 & 7.159e-04  & 1.558e-05& 6.889e-04   & 1.539e-05&4.909e-03&1.33\\\hlinewd{1pt}
ooc & \green{1.81} & \green{2.74} & \green{2.12} & \green{2.95}  
&\green{1.8}&\\ \hline
 eoc &1&2 &1&2 &1.5&1.5\\ \hline
\end{tabular}
\caption{Convergence results with \texttt{Bempp} for Maxwell BVP using Example 
3 and introducing \textsf{Error B} (diving values on diagonal of $\ME$ by two).}
\label{tab:ErrorB_E}
\end{center}
\end{table}

\begin{table}[h!]
\begin{center}
\begin{tabular}{|c|c|c|c|c|c|c|}\hline
 \# elements & $\|\vect{\rho}_{M,1}\|_{2}$ & $\|\vect{\rho}_{M,1}\|_{\infty}$& 
 $\|\vect{\rho}_{M,2}\|_{2}$ & $\|\vect{\rho}_{M,2}\|_{\infty}$&  $\mathsf{e}_R$, 
 tol=$10^{-10}$& ooc $\mathsf{e}_R$\\\hline
 24   & 2.783e02 & 6.276e01& 2.784e02 & 6.278e01& 4.8284e-01&\\\hline
 96   & 9.028e01 & 1.916e01&9.030e01 & 1.916e01& 1.4313e-017&1.75\\\hline
 384  & 2.469e01& 2.424e00&2.469e01 & 2.424e00&4.1327e-02&1.79\\\hline
 1536 & 6.401e00 &3.040e-01& 6.401e00 & 3.040e-01& 1.2059e-02&1.78\\\hline
6144 & 1.626e00  & 3.803e-02&1.626e00   & 3.803e-02& 3.6064e-03&1.74\\\hlinewd{1pt}
ooc & \green{1.87} & \green{2.74} & \green{1.87} & \green{2.74}  
&\green{1.8}&\\ \hline
eoc &1&2 &1&2&1.5 &1.5\\ \hline
\end{tabular}
\caption{Convergence results with \texttt{Bempp} for Maxwell BVP using Example 
3 and introducing \textsf{Error C} (scaling diagonal of $\ME$ by $10^3$).}
\label{tab:ErrorC_E}
\end{center}
\end{table}

\begin{table}[h!]
\begin{center}
\begin{tabular}{|c|c|c|c|c|c|c|}\hline
 \# elements & $\|\vect{\rho}_{M,1}\|_{2}$ & $\|\vect{\rho}_{M,1}\|_{\infty}$& 
 $\|\vect{\rho}_{M,2}\|_{2}$ & $\|\vect{\rho}_{M,2}\|_{\infty}$& $\mathsf{e}_R$, 
 tol=$10^{-10}$& ooc $\mathsf{e}_R$\\\hline
 24   & 1.911e02 & 6.276e01& 2.012e02 & 6.278e01& 8.5126e-01&\\\hline
 96   & 6.126e01 & 1.916e01& 6.338e01 & 1.916e01&1.3188e-01&2.69\\\hline
 384  & 1.692e01& 2.424e00& 1.732e01 & 2.424e00&4.6648e-02&1.78\\\hline
 1536 & 4.465e00 &3.040e-01& 4.498e00 & 3.040e-01& 1.4739e-02&3.16\\\hline
6144 & 1.142e00  & 3.803e-02& 1.147e00   & 3.803e-02& 4.2723e-03 &1.79\\\hlinewd{1pt}
ooc & \green{1.86} & \green{2.74} & \green{1.87} & \green{2.74}& 
\green{1.8} &\\ \hline
 eoc &1&2 &1&2&1.5 &1.5\\ \hline
\end{tabular}
\caption{Convergence results with \texttt{Bempp} for Maxwell BVP using Example 
3 and introducing \textsf{Error D} (scaling first half of diagonal of $\ME$ by 
$10^3$).}
\label{tab:ErrorD_E}
\end{center}
\end{table}

\pagebreak
\textbf{Error E:} Table~\ref{tab:ErrorE_E} displays the results when 
scaling the diagonal of $\ME$ by a factor $h^-1$, where $h$ is the meshwidth 
(computed as average size of an element in \texttt{Bempp}). We see that while 
the observed convergence rates for Calder\'on residuals are lower than expected, 
the rate in $\Hdiv$-norm still is higher than we expected. Therefore, the 
Calder\'on residual test detects the introduced error, while the standard 
test does not.

\begin{table}[h!]
\begin{center}
\begin{tabular}{|c|c|c|c|c|c|c|}\hline
 \# elements & $\|\vect{\rho}_{M,1}\|_{2}$ & $\|\vect{\rho}_{M,1}\|_{\infty}$& 
 $\|\vect{\rho}_{M,2}\|_{2}$ & $\|\vect{\rho}_{M,2}\|_{\infty}$& $\mathsf{e}_R$, 
 tol=$10^{-10}$& ooc $\mathsf{e}_R$\\\hline
 24   & 1.278e00 &2.669e-01& 1.381e00 & 2.916e-01&1.1026e00&\\\hline
 96   & 8.329e-01 &1.770e-01 & 8.480e-01 & 1.799e-01& 1.4364e-01&2.94\\\hline
 384  &4.667e-01 & 4.620e-02 & 4.688e-01 & 4.641e-02&4.1157e-02&1.80 \\\hline
 1536 & 2.461e-01 & 1.176e-02& 2.463e-01 & 1.178e-02&1.2049e-02&1.77\\\hline
6144 &  1.263e-01 & 2.963e-03 & 1.263e-01 & 2.964e-03&3.6062e-03&1.74 \\\hlinewd{1pt}
ooc & \red{0.84} & \red{1.69} & \red{0.87} & \red{1.72}  & 
\green{2.0}&\\ \hline
eoc &1&2 &1&2&1.5&1.5 \\ \hline
\end{tabular}
\caption{Convergence results with \texttt{Bempp} for Maxwell BVP using Example 
3 and introducing \textsf{Error E} (scaling diagonal of $\ME$ by $h^{-1}$).}
\label{tab:ErrorE_E}
\end{center}
\end{table}

%%%%%%%%%%%%%%%%%%%%%%%%%%%%%%%%%%%%%%%%%%%%%%%%%%%%%%%%%%%%%%%%%%%%%%%%%%%%%%%
\section{Summary and Discussion}

In this summary, we use the following notation:
\begin{itemize}
 \item `\xmark' if the convergence rate is lower than expected, which means the test 
says ''example fails to be correct``;
\item `\cmark' if the convergence rate is higher 
or deviates less than $10\%$ from what we predicted, which means the test says ''example is correct``;
\item `*' when the obtained errors for all refinement meshes are within machine 
precision, so we cannot use the convergence rate to make a conclusion, but it is clear that no errors were detected;
\item `$\circledast$' when the convergence rates for the `MMS test' fluctuate, from which we conclude that the test fails.
\end{itemize}
The idea is to compare the outputs of the Calder\'on residual test (dubbed 
`Calder\'on test' for short), and the MMS tests from \eqref{eq:conv_test_1},
\eqref{eq:conv_test_2}, and \eqref{eq:conv_error_maxwell} (labelled `MMS' in the tables). 
In 
particular, we do not think it is a problem that the Calder\'on residual test 
is unable to detect errors in the matrix entries when the standard MMS 
test also fails to notice. Hence, we also consider in our summary whether the 
outputs agree or not. 
We remark that for the MMS test, our conclusion follows the convergence rates calculated between 
consequtive refinement levels and not the linear regression based on all results, 
since in some cases there are large differences between the computation methods.  

\begin{table}[h!]
\begin{center}
\begin{tabular}{|c||c|c|c||c|c|c|}\hline
\multicolumn{7}{|c|}{Weakly singular operator $\OV$}\\ \hline
& \multicolumn{3}{c||}{Example 1a}& \multicolumn{3}{c|}{Example 2}\\ \hline
Artificial Error & Calder\'on test & MMS & Agree? & Calder\'on test 
& Norm test & Agree?\\ \hline
\textsf{Error A} & \xmark  & $\circledast$ & yes & \xmark& \xmark & yes \\ \hline
\textsf{Error B} & \cmark  & \cmark & yes & \xmark& \xmark & yes \\ \hline
\textsf{Error C} & \cmark  & * & yes & \cellcolor[gray]{0.9}\cmark& \cellcolor[gray]{0.9}\xmark & \cellcolor[gray]{0.9}no \\ \hline
\textsf{Error D} & \cmark  & * & yes & \cellcolor[gray]{0.9}\cmark& \cellcolor[gray]{0.9}\xmark & \cellcolor[gray]{0.9}no \\ \hline
\textsf{Error E} & \cellcolor[gray]{0.9}\xmark  & \cellcolor[gray]{0.9}* &\cellcolor[gray]{0.9}no & \xmark& \xmark & yes \\ \hline
\end{tabular}\hspace{0.5in}
\begin{tabular}{|c||c|c|c||c|c|c|}\hline
\multicolumn{7}{|c|}{Hypersingular operator $\OW$}\\ \hline
& \multicolumn{3}{c||}{Example 1a}& \multicolumn{3}{c|}{Example 2}\\ \hline
Artificial Error & Calder\'on test & MMS & Agree? & Calder\'on test 
& Norm test & Agree?\\ \hline
\textsf{Error A} & \xmark  & $\circledast$ & yes & \xmark  & \xmark & yes \\ \hline
\textsf{Error B} & \xmark  & \xmark & yes  & \xmark  & \xmark & yes\\ \hline
\textsf{Error C} & \cellcolor[gray]{0.9}\xmark  & \cellcolor[gray]{0.9}* & \cellcolor[gray]{0.9}no & \xmark  & \xmark & yes \\ \hline
\textsf{Error D} & \xmark  & \xmark & yes & \xmark  & \xmark & yes \\ \hline
\textsf{Error E} &\cellcolor[gray]{0.9} \xmark  & \cellcolor[gray]{0.9}* & \cellcolor[gray]{0.9}no & \xmark  & \xmark & yes \\ \hline
\end{tabular}
\end{center}
\caption{Summary of convergence results for Laplace BPV.}
\label{tab:sumLap}
\end{table}

It turns out that the Calder\'on residual estimates that we proposed in this 
paper did not provide the robust debugging tool that we had hoped for. Yet, 
from our numerical experiments, we believe that it can still be useful in 
some circumstances. Indeed, we see in Table~\ref{tab:sumLap} that in the case 
of $\OW$ for Example 2 of the Laplacian, the Calder\'on residual test is always able to 
detect errors. However, for that BVP, it is unreliable for $\OV$. 
Even if it does work for some introduced errors, this is not helpful because 
we do not know a priori what the problem is when debugging. 

\begin{table}[h!]
\begin{center}
\begin{tabular}{|c||c|c|c|}\hline
\multicolumn{4}{|c|}{Electric Field Integral operator $\OE_k$}\\ \hline
Artificial Error & Calder\'on test & Norm test & Agree? \\ \hline
\textsf{Error A} & \cellcolor[gray]{0.9}\xmark  &\cellcolor[gray]{0.9} \cmark & \cellcolor[gray]{0.9}no \\ \hline
\textsf{Error B} & \cmark  & \cmark & yes  \\ \hline
\textsf{Error C} & \cmark  & \cmark & yes \\ \hline
\textsf{Error D} & \cmark  & \cmark & yes \\ \hline
\textsf{Error E} & \cellcolor[gray]{0.9}\xmark  & \cellcolor[gray]{0.9}\cmark &\cellcolor[gray]{0.9} no \\ \hline
\end{tabular}
\end{center}
\caption{Summary of convergence results for Maxwell BPV.}
\label{tab:sumMaxwell}
\end{table}

In the case of the EFIE, the MMS test seems to always be oblivious to 
the introduced errors. That means that these do not affect the solution of the 
boundary integral equation, and hence, it is hard to decide whether it is 
relevant that the Calder\'on residual test does ring the alarm in some cases. 

In spite of our disappointment, we believe it is worth reporting these findings.
On the one hand, it may be useful for some practitioners, and then it is 
important that they are aware of the shortcomings of this technique. On the other 
hand, it may be possible in the future to make these estimates sharper.

\section*{Acknowledgments}
The authors based in Delft would like to thank Kees Vuik for his valuable feedback 
in an earlier version of the numerical results Section. A.W. would like to acknowledge D. Hoonhout 
and J. Wallaart for their IT support.  
This work has been partially funded by the Dutch Research Council (NWO) 
under the NWO-Talent Programme Veni with the project number VI.Veni.212.253.

%%%%%%%%%%%%%%%%%%%%%%%%%%%%%%%%%%%%%%%%%%%%%%%%%%%%%%%%%%%%%%%%%%%%%%%%%%%%%%%
%BIBLIOGRAPHY
\bibliographystyle{plain}
\bibliography{bibliography}

\appendix
%%%%%%%%%%%%%%%%%%%%%%%%%%%%%%%%%%%%%%%%%%%%%%%%%%%%%%%%%%%%%%%%%%%%%%%%%%%%%%%
\section{Proof of Lemma~\ref{lemma:equiv_hcurl_hdiv}}
\label{app:Lemma}
\begin{proof}
Let us recall the following definitions and relations between spaces from
\cite{BuC01-I, BuC01-II,BCS02,BuH03}.
\begin{itemize}
\item $H^{1/2}_T(\Gamma):=\{\gamma_T(u):= n\times(u|_{\Gamma}\times n), 
u\in H^1(\Omega)^3\}$, \\$\|p\|_{H^{1/2}_T(\Gamma)}:= \inf \{\|u\|_{H^1(
\Omega)}|\gamma_T u = p\}$,
\item $H^{1/2}_R(\Gamma):= \{v| n\times v \in H^{1/2}_T(\Gamma)\}$, 
$\|q\|_{H^{1/2}_{R}(\Gamma)}:= \|n\times q\|_{H^{1/2}_T(\Gamma)}$,
\item $H^{-1/2}_T(\Gamma):= H^{1/2}_R(\Gamma)'$, 
$H^{-1/2}_R(\Gamma):= H^{1/2}_T(\Gamma)'$ (using the symmetric 
pairing)
\item $\Hdiv := \{\mu \in H^{-1/2}_R(\Gamma)| \div_{\Gamma}\mu \in 
H^{-1/2}(\Gamma)\}$,\\ $\|\mu\|_{\Hdiv}:= \|\mu\|_{H^{-1/2}_R(
\Gamma)}+ \|\div_{\Gamma}\mu\|_{H^{-1/2}(\Gamma)}$
\item $\Hcurl:= \{\mu\in H^{-1/2}_T(\Gamma)| \bcurl_{\Gamma}\mu \in 
H^{-1/2}(\Gamma)\}$, \\
$\|\mu\|_{\Hcurl}:= \|\mu\|_{H^{-1/2}_T(\Gamma)}+ \|\bcurl_{\Gamma}\mu
\|_{H^{-1/2}(\Gamma)}$.
\end{itemize}
Since it holds that $\bcurl_{\Gamma} v = \div_{\Gamma}(\Vn\times v)$ for $v\in 
\Hcurl$, we only need to show that 
\begin{equation}\label{eq:equivalence_R,T}
  \|v\|_{H^{-1/2}_T(\Gamma)} =\|\Vn\times v\|_{H^{-1/2}_R(\Gamma)}.
\end{equation}
Let $\OR u := \Vn\times u : H^{1/2}_R(\Gamma)\to H^{1/2}_T(\Gamma)$. 
From \cite[Equation (16) and below]{BCS02} it then follows that $\OR^{-1} = 
\OR^{\star} = -\OR$. Hence
\begin{align}
\|v\|_{H^{-1/2}_T(\Gamma)} &= 
\sup_{w \in H^{1/2}_R \setminus \lbrace0\rbrace} \frac{|\langle v,w
\rangle_{H^{-1/2}_T(\Gamma)\times H^{1/2}_R(\Gamma)}|}
{\|w\|_{H^{1/2}_R(\Gamma)}}\nonumber\\
&= \sup_{\OR w \in H^{1/2}_R \setminus \lbrace0\rbrace} 
\frac{|\langle \OR^{\star} \OR v,w\rangle_{H^{-1/2}_T(\Gamma)\times H^{1/2}_R(
\Gamma)}|}{\|\OR w\|_{H^{1/2}_T(\Gamma)}}\nonumber\\
&= \sup_{u \in H^{1/2}_R \setminus \lbrace0\rbrace} \frac{|\langle \OR 
v,u\rangle_{H^{-1/2}_R(\Gamma)\times H^{1/2}_T(\Gamma)}|}{
\|u\|_{H^{1/2}_T(\Gamma)}} = 
\|\OR v\|_{H^{-1/2}_R(\Gamma)}.
\end{align}

\end{proof}

\section{Additional observations}
Consider the following setting: let $\Omega$ be the unit cube in 3D, and choose real numbers $a_1, a_2$ and $a_3$ such that $\sum_{i=1}^{3}a_i = 0$. Then
\begin{equation*}
  u(x,y,z) = a_1 x^2 + a_2 y^2 + a_3 z^2
\end{equation*}
is a solution to the Laplace equation on $\Omega$. Then 

\begin{equation*}
  \tau_N u  = \begin{pmatrix}
                2 a_1 x\\ 2 a_2 y \\ 2a_3 z
              \end{pmatrix} \cdot \vect{n} |_{\Gamma} = \begin{cases}
                                                          0,\quad x= 0\\
                                                          2a_1,\quad x=1\\
                                                          0,\quad y= 0\\
                                                          2a_2,\quad y=1\\
                                                          0,\quad z= 0\\
                                                          2a_3,\quad z=1,
                                                        \end{cases}
\end{equation*}
which consists of piecewise constant functions. This means that $\OI_h^{p-1,-1} \tau_N u = \tau_N u$ in this case. In Example 2 we are in this situation. Then, the residuals \eqref{eq:residual_D} and \eqref{eq:residual_N} become 
 \begin{align}
r_D(\psi) :=& \int_{\Gamma}[(\frac{1}{2} \Id -\OK)\OI_h^{p,0} (\tau_D u)
](x) \psi(x) \d S(x)+ \OV(\OI_h^{p-1,-1}\tau_N u)](x) \psi(x) \d S(x) \nonumber\\
=& \int_{\Gamma}[(\frac{1}{2} \Id -\OK)(\OI_h^{p,0} -\Id)(\tau_D u)
](x) \psi(x) \d S(x), 
\end{align}
and
\begin{align}
r_N(\phi) :=& \int_{\Gamma}[\OW \OI_h^{p,0}(\tau_D u)
](x) \phi(x) \d S(x)+ (\frac{1}{2}\Id +\OK')\OI_h^{p-1,-1}\tau_N u](x) \phi(x) \d S(x)\nonumber\\
=& \int_{\Gamma}[\OW (\OI_h^{p,0}-\Id)(\tau_D u)
](x) \phi(x) \d S(x), 
\end{align}
which are slightly simpler forms, and might help with identifying which terms contribute to the fact that we numerically observe higher convergence rates. 

\section{Eigenvalues corresponding to matrices with artificial errors for $\MV$}\label{app:eig}

In this Appendix we take a closer look at what is happening when we introduce 
\textsf{Error B}, i.e., when we scale the diagonal of $\MV$ with a factor $1/2$.
Figures~\ref{fig:LaplaceBempp_ev1a} and \ref{fig:LaplaceBempp_ev} display the 
eigenvalues of the original matrix $\MV$ and the resulting scaled matrix $\MV^e$ 
for Examples 1a and 2, respectively. In both cases, we are considering the 
smallest mesh. We see in these plots that scaling the 
diagonal of $\MV$ with a factor $\frac{1}{2}$ shifts down the spectra. When 
this is done, quite a lot of the eigenvalues of $\MV^e$ are now negative, and 
the matrix is thus no longer symmetric positive definite. This may be one 
of the reasons why the iterative solvers we tried, namely CG and GMRES, failed. 

\begin{figure}[h!]
\centering
\begin{subfigure}[t]{0.49\textwidth}
	\includegraphics[width=\textwidth, trim={0 0.5cm 0 1cm},clip]{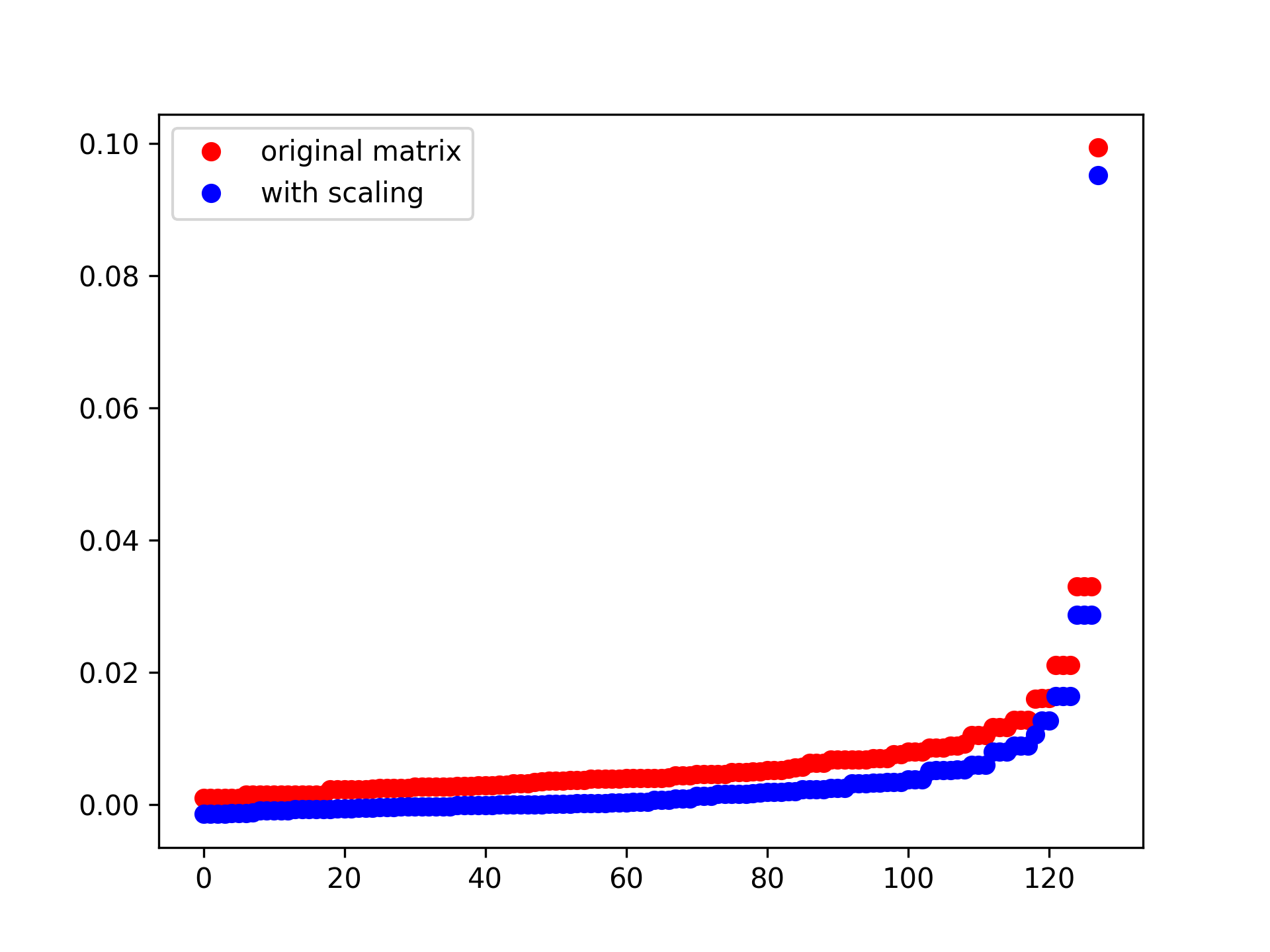}
	\caption{All eigenvalues.}
	\label{fig:ev}
\end{subfigure} 
\hfill
\begin{subfigure}[t]{0.49\textwidth}
	\includegraphics[width=\textwidth, trim={0 0.5cm 0 1cm},clip]{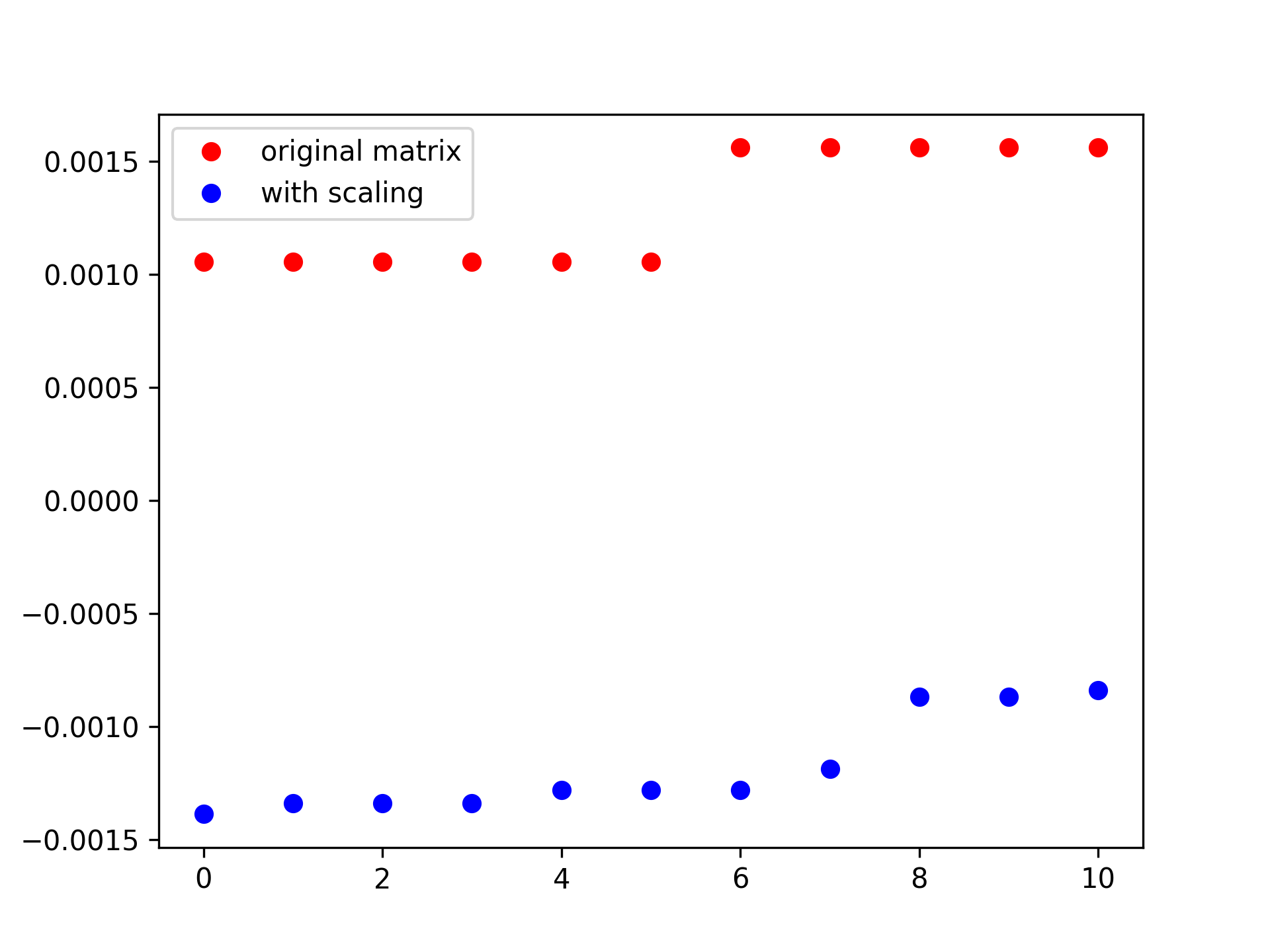}
	\caption{The first ten eigenvalues.}
	\label{fig:first_10_ev}
\end{subfigure} 
\vspace{-0.1in}
\caption{Eigenvalues when scaling the diagonal of $\MV$ by $1/2$ in Example 1a 
for the Laplacian (\textsf{Error B}). Results for smallest mesh, i.e., $N= 128$ elements.}
\label{fig:LaplaceBempp_ev1a}
\end{figure}

\begin{figure}[h!]
\centering
\begin{subfigure}[t]{0.49\textwidth}
	\includegraphics[width=\textwidth, trim={0 0.5cm 0 1cm},clip]{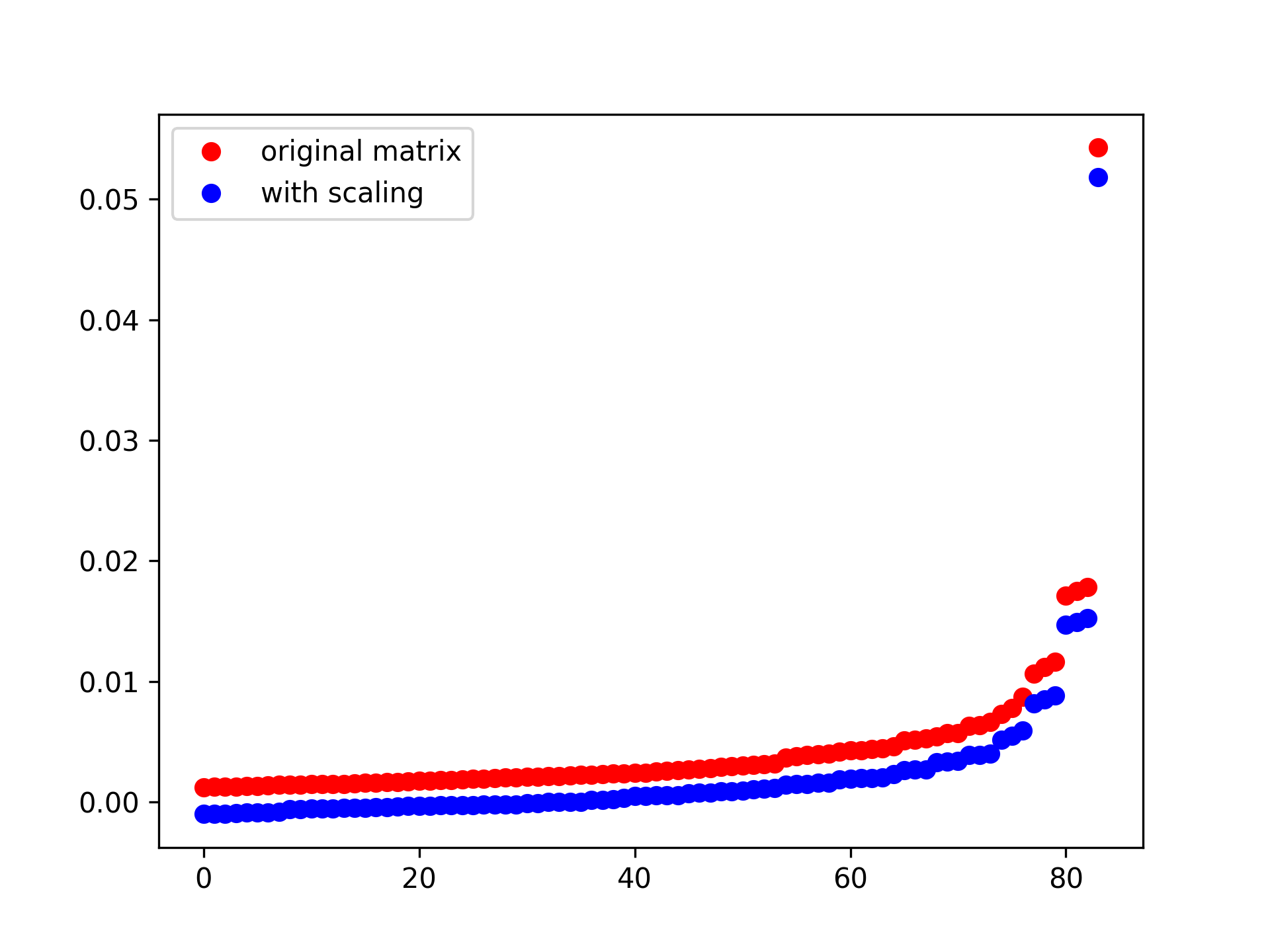}
	\caption{All eigenvalues.}
	\label{fig:ev}
\end{subfigure} 
\hfill
\begin{subfigure}[t]{0.49\textwidth}
	\includegraphics[width=\textwidth, trim={0 0.5cm 0 1cm},clip]{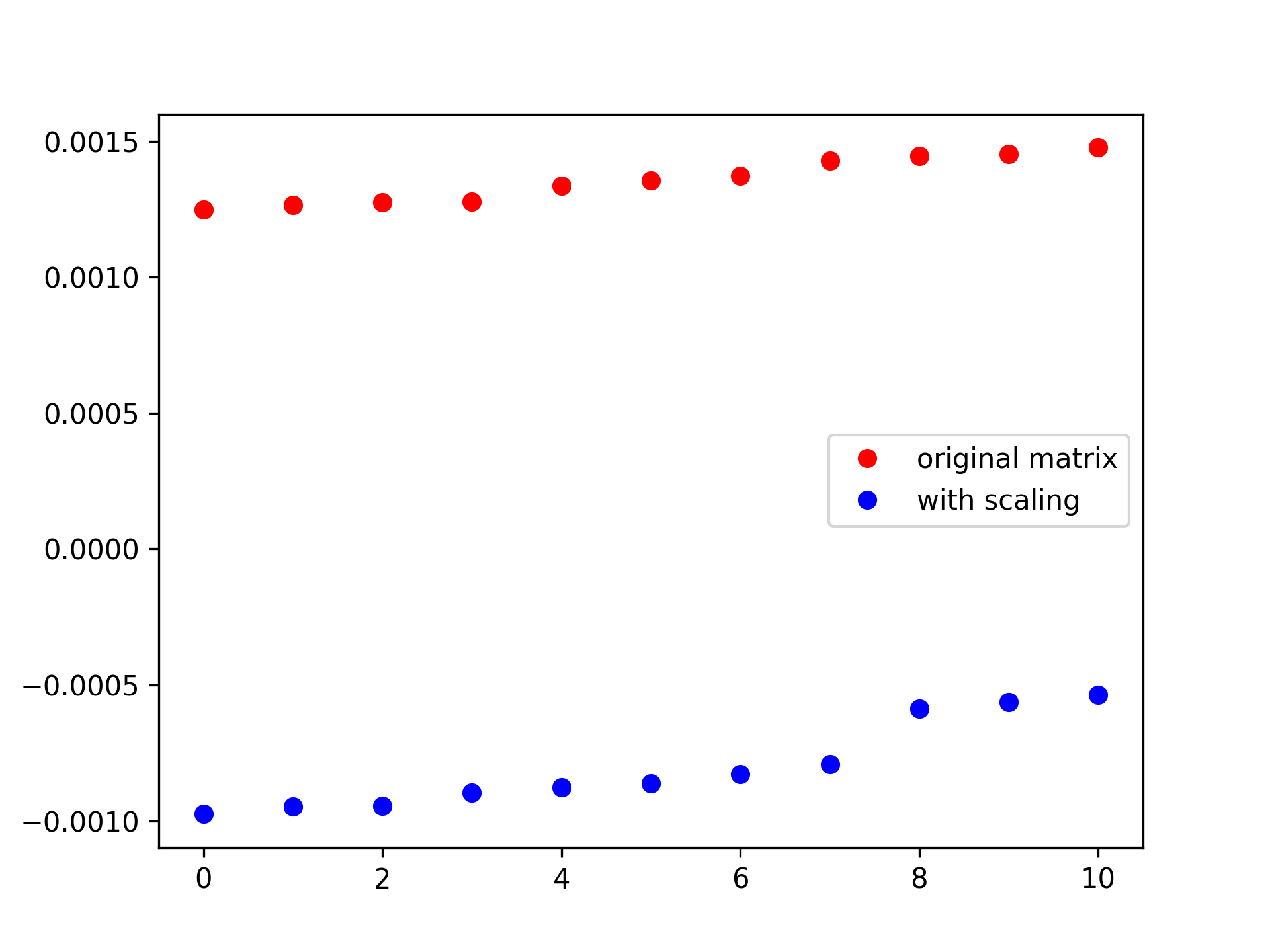}
	\caption{The first ten eigenvalues.}
	\label{fig:first_10_ev}
\end{subfigure} 
\vspace{-0.1in}
\caption{Eigenvalues when scaling the diagonal of $\MV$ by $1/2$ in Example 2 
for the Laplacian (\textsf{Error B}). Results for smallest mesh, i.e., $N= 84$ elements.}
\label{fig:LaplaceBempp_ev}
\end{figure}

\end{document}